\newtheorem{theorem}{Theorem}
\newtheorem{remark}{Remark}
\newtheorem{assumption}{Assumption}
\newtheorem{definition}{Definition}
\newtheorem{corollary}{Corollary}
\newcommand{\Db}{\mathbf{D}}
\begin{document}
	\title{Fast Operator-Splitting Methods for Nonlinear Elliptic Equations}
    \date{}
\author{
Jingyu Yang\thanks{Department of Mathematics, Hong Kong Baptist University, Kowloon Tong, Hong Kong. Email: {\bf 24481092@life.hkbu.edu.hk}.}
,
Shingyu Leung\thanks{Department of Mathematics, The Hong Kong University of Science and Technology, Clear Water Bay, Hong Kong. Email: {\bf masyleung@ust.hk}.}
,
Jianliang Qian\thanks{Department of Mathematics and Department of CMSE, Michigan State University, East Lansing, MI 48824, USA. Email: {\bf jqian@msu.edu}. }
,
Hao Liu\thanks{Department of Mathematics, Hong Kong Baptist University, Kowloon Tong, Hong Kong. Email: {\bf haoliu@hkbu.edu.hk}.}
}   
	\maketitle
	 \begin{abstract} 

Nonlinear elliptic problems arise in many fields, including plasma physics, astrophysics, and optimal transport. In this article, we propose a novel operator-splitting/finite element method for solving such problems. We begin by introducing an auxiliary function in a new way for a semilinear elliptic partial differential equation, leading to the development of a convergent operator-splitting/finite element scheme for this equation. The algorithm is then extended to fully nonlinear elliptic equations of the Monge-Amp\`ere type, including the Dirichlet Monge-Amp\`ere equation and Pucci’s equation. This is achieved by reformulating the fully nonlinear equations into forms analogous to the semilinear case, enabling the application of the proposed splitting algorithm. In our implementation, a mixed finite element method is used to approximate both the solution and its Hessian matrix. Numerical experiments show that the proposed method outperforms existing approaches in efficiency and accuracy, and can be readily applied to problems defined on domains with curved boundaries.
	 \end{abstract}
     
	 \textbf{Keywords}: nonlinear elliptic problems, \and operator splitting, \and  \and Monge-Amp\`ere equation, Pucci's equation. 

	\section{Introduction}
 Nonlinear elliptic partial differential equations (PDEs) arise in many fields, including interface problems (the semilinear elliptic PDEs \cite{chen2020two}), optimal mass transportation (the Monge-Amp\`ere type equations  \cite{villani2008optimal}), and segregation of populations with high competition (the Pucci's equation \cite{Caffarelli2}). Designing accurate and efficient numerical methods to solve this class of equations has been an important topic for a long time. In this article, we design a new class of operator-splitting methods for solving fully nonlinear elliptic equations by first reformulating the equation into a form analogous to a semilinear elliptic equation, and then splitting the resulting equation into two equations with the help of an auxiliary function.
  
  To start with, we consider a semilinear elliptic partial differential equation in the following form:
  \begin{equation}
  	-\Delta u= f(x,u), \  \mbox{in} \  \Omega.
  	\label{eq.semilinear}
  \end{equation} 
  The numerical study of (\ref{eq.semilinear}) has progressed significantly, largely due to its tractable structure in which the nonlinearity depends solely on the solution $u$ and not on its derivatives. Numerous numerical methods have been developed to solve this equation. A two-grid method was introduced in \cite{Xu}, leveraging finite element spaces defined on both coarse and fine grids to efficiently approximate the solution, and an adaptive finite element method based on a multilevel correction scheme was presented in \cite{Lin}. A localized orthogonal decomposition method was studied in \cite{henning2014localized} for semilinear elliptic equations with heterogeneous, variable coefficients. More recently, a ResNet with ReLU$^2$ activations was developed in \cite{chen2024analysis} to solve this equation. All the above-cited methods couple the nonlinear term with the linear one. In our method, we introduce an auxiliary function to decouple the nonlinear term from the linear one so that we can develop a fast and easy-to-implement operator-splitting method to solve the semilinear elliptic equation. 
  
  On the other hand, although existence, uniqueness, and regularity theories of solutions for general fully nonlinear PDEs are well documented in \cite{Caffarelli1}, numerical methods for these equations are more challenging to develop. One reason is that a fully nonlinear PDE generally lacks a variational structure so that the powerful weak formulation designed for linear PDEs is not directly applicable. Since fully nonlinear elliptic PDEs of Monge-Amp\`ere type arise in fields such as optimal transport, antenna reflector design, and mesh deformation \cite{villani2008optimal,  Boonkkamp}, numerical methods for these equations are called for. Different strategies for handling the nonlinearity of the Monge-Amp\`ere operator result in different algorithms. One approach is to reformulate the Monge-Amp\`ere equation as an optimization problem, which is then solved via the augmented Lagrangian method or a (relaxed) least-squares algorithm \cite{Dean1,Dean2,Caboussat1}. In \cite{two}, the authors proposed a standard finite-difference method based on Gauss-Seidel iterations, and in \cite{Oberman1}, the author proposed a wide stencil finite-difference discretization for the Monge-Amp\`ere equation; more finite-difference algorithms can be found in \cite{neilan2020monge, Froese2, Benamou2, Nochetto1}. In \cite{Neilan1}, the authors utilized a mixed finite element method to solve a fourth-order quasilinear PDE, which in turn approximates the Monge-Amp\`ere equation by the vanishing moment method; see \cite{Neilan1} for more details. In \cite{Wangp}, the authors proposed the spectral collocation method to solve this equation, and in \cite{DG}, the authors proposed the discontinuous Galerkin method to handle the equation. Based on a divergence form of the Monge-Amp\`{e}re operator, in \cite{Liu2,Liu1}, an operator-splitting method is proposed to handle this equation, in which a mixed finite element method with piecewise linear finite elements is used to discretize the equation; the operator-splitting method was further developed to solve the eigenvalue problem of the Monge-Amp\`ere operator and the Minkowski problem in \cite{liu2022efficient,glowinski2020numerical,Liu3}. The operator-splitting method developed in \cite{Liu2,Liu1} needs a projection step to enforce the convexity. In this article, we propose a new operator-splitting strategy to enforce the convexity naturally.   
  
  The Pucci's equation, which involves the Pucci's extremal operator, represents an important class of fully nonlinear PDEs with significant applications in stochastic control theory and population models \cite{Bensoussan, Caffarelli2}. The theoretical aspects of Pucci's equation have been thoroughly investigated in the works of \cite{Felmer2}, and a solid mathematical foundation for this problem has been established. However, only a few numerical methods are available for this equation in the literature. Specifically, three distinct nonlinear least-squares finite-element methods for the two-dimensional Pucci's equation with Dirichlet boundary conditions were proposed in \cite{Glowinski, Brenner, Caboussat2}. Additionally, a non-variational finite element method was employed to study the two-dimensional case in \cite{lakkis2013finite}. Our proposed method can be easily applied to solve Pucci's equation while providing optimal convergence rate and higher accuracy compared to existing methods.
  
  The operator-splitting methodology is an effective approach to solve complicated problems in imaging, communication, science, and engineering \cite{Splitting}. It decomposes a complicated problem into several easy-to-solve subproblems. In this article, we first propose an efficient operator-splitting method to solve the semilinear elliptic equation of the form (\ref{eq.semilinear}). To do that, we first introduce an auxiliary function to decouple the nonlinear term from the linear term and accordingly convert the problem into a PDE system. The system is then solved by the Lie-splitting method, and we further establish the convergence of the resulting splitting method for the semilinear elliptic equation. With the splitting method for the semi-linear elliptic equation at our disposal, we extend this algorithm to solve fully nonlinear elliptic equations of the Monge-Amp\`ere type. 
  
  Our contributions are summarized as follows:
  \begin{enumerate}
  	\item We propose an operator-splitting method for semilinear equations of the form (\ref{eq.semilinear}) and establish its convergence.
  	\item By utilizing an eigenvalue formulation of Hessian matrices, we extend the method to solve Monge-Amp\`ere type equations, which cover the Dirichlet Monge-Amp\`ere equation and Pucci’s equation.
  	\item The spatial discretization employs a mixed finite element method with piecewise linear bases, facilitating straightforward implementation on irregular domains.
  	\item Our method achieves optimal convergence rates for problems admitting classical solutions and demonstrates greater efficiency compared to existing methods, while maintaining comparable or improved accuracy.
  \end{enumerate}
  
  Our paper is organized as follows: In Section \ref{sec.semilinear}, we propose a novel operator-splitting method for semilinear elliptic PDEs, and we also analyze the convergence of the splitting method. In Section \ref{sec.MA}, We demonstrate how to apply the proposed method to solve Monge-Amp\`ere type equations, including the Dirichlet Monge-Amp\`ere equation and Pucci's equation. 
  In Section \ref{sec.FEM}, we use a mixed finite element method with piecewise linear bases to discretize the resulting PDE system due to splitting. Section \ref{sec.implementation} provides a detailed implementation of our algorithm. In Section \ref{sec.experiments}, we demonstrate the effectiveness of the proposed algorithm by carrying out a variety of numerical experiments, and we further compare it to existing methods. Section \ref{sec.conclusion} concludes the paper.

\section{Operator Splitting Method of Semilinear Elliptic Problem}
\label{sec.semilinear}
We introduce some notations. Let $\Omega\subset \mathbb{R}^2$
be a bounded domain, and denote the standard Sobolev space by $W^{m,p}(\Omega)$, which is equipped with the norm and semi-norm defined by  
	\begin{equation*}
		\|\phi\|_{m,p} = \left( \sum_{|\alpha| \leq m} \|D^\alpha \phi\|_{L^p(\Omega)}^p \right)^{1/p}, \quad \forall \phi \in W^{m,p}(\Omega)
	\end{equation*}
	and
	\begin{equation*}
		|\phi|_{m,p} = \left( \sum_{|\alpha| = m} \|D^\alpha \phi\|_{L^p(\Omega)}^p \right)^{1/p}, \quad \forall \phi \in W^{m,p}(\Omega)
	\end{equation*}
	respectively. As usual, we denote $H^m(\Omega) = W^{m,2}(\Omega)$ with the norm $\|\cdot\|_m = \|\cdot\|_{m,2}$ and semi-norm $|\cdot|_m = |\cdot|_{m,2}$ . When $m = 0$, $H^0(\Omega)=L^2(\Omega)$ and $\|\cdot\|_0 = \|\cdot\|_{0,2}=\|\cdot\|_{L^2}$. The set $H_g^1(\Omega)$ contains functions belonging to $H^1(\Omega)$ with trace $g$ on $\partial \Omega$. 
    
	Furthermore, let $(\cdot, \cdot)$ denote the $L^2$ inner product. If $\phi \in H_0^1(\Omega)$, then it follows from the  Poincar\'{e} inequality \cite{Evans}
	\begin{equation}
		\|\phi\|_{0} \leq C_1|\phi|_{1},
        \label{eq.poincare}
	\end{equation}
	where the constant $C_1$ only depends on $\Omega$.

	\subsection{Semilinear Elliptic Equation and Its Reformulation}
We consider the following semilinear elliptic equation with the Dirichlet boundary condition:
	\begin{equation}\label{s1}
		\begin{cases}
			-\Delta u= f(x,u), \  \mbox{ in } \   \Omega, \\
            u=g, \ \mbox{ on }  \ \partial \Omega.
		\end{cases}
	\end{equation}
We propose an operator-splitting method to solve \eqref{s1} by decoupling the nonlinear term $f(x,u)$ from the linear term $-\Delta u$. Specifically, by introducing an auxiliary function $w$, equation  \eqref{s1} is equivalent to the following PDE system: 
	\begin{equation}\label{s2}
			\begin{cases}
            \begin{cases}
			    -\Delta u= f(x,w), \ \mbox{ in } \ \Omega,\\
			u=g, \  \mbox{ on } \ \partial \Omega,\\
            \end{cases}\\
			w-u=0,\ \mbox{ in } \ \Omega.
		\end{cases}
	\end{equation}
	We associate the PDE system \eqref{s2} with the following initial value problem:
	\begin{equation}\label{ivp1}
		\begin{cases}
        \begin{cases}
			    \dfrac{\partial u}{\partial t} -\Delta u= f(x,w),\  \mbox{ in } \ \Omega\times (0,+\infty),\\
			u=g, \  \mbox{ on }   \ \partial \Omega\times(0,+\infty),
        \end{cases}\\
			\dfrac{\partial w}{\partial t}+\gamma(w-u)=0, \  \mbox{ in } \ \Omega\times (0,+\infty),\\
			u(0)=u_0,w(0)=w_0,
		\end{cases}
	\end{equation}
	where $\gamma$ is a positive parameter controlling the evolution speed of $w(t)$.
	Consequently, solving system \eqref{s2} is reduced to finding the steady state solution of problem \eqref{ivp1}. 
    
    For the choice of $\gamma$, it should be selected so that $w(t)$ evolves with a speed similar to that of $u(t)$. Thus, we suggest taking 
	\begin{equation*}
		\gamma=\lambda_0,
	\end{equation*}
	where $\lambda_0$ denotes the smallest of eigenvalue of $-\Delta$ in $H_0^1(\Omega)$. A similar strategy is adopted in \cite{Liu2}.
	
	\subsection{Time Discretization by Operator Splitting}
    Problem \eqref{ivp1} is well-suited to be solved by operator-splitting methods. Here we adopt the simple Lie-splitting scheme \cite{Splitting}.
    
	Let $\tau>0$ denote the time step with $t^n=n\tau$, and let $(u^n, w^n)$ represent the numerical solution of $(u,w)$ at $t=t^n$, where $n=0,1,2,\cdots$. Assume that $(u^0,w^0)=(u_0,w_0)$ is given. For $n\geq 0$, the operator splitting scheme updates
$(u^n,w^n)\rightarrow(u^{n+\frac{1}{2}},w^{n+\frac{1}{2}})\rightarrow(u^{n+1},w^{n+1})$ via two substeps:
	
	$\textbf{Substep 1}$: Solve
	\begin{equation} \label{step1}
		\begin{cases}
			\begin{cases}
			    \dfrac{\partial u}{\partial t} -\Delta u=f(x,w^n) , \   \mbox{ in } \ \Omega\times(t^n, t^{n+1}),\\
			u=g, \    \mbox{ on }  \  \partial\Omega\times(t^n, t^{n+1}),
			\end{cases} \\
			\dfrac{\partial w}{\partial t}=0,\    \mbox{ in }  \ \Omega\times(t^n, t^{n+1}),\\	
			(u(t^n),w(t^n))=(u^n, w^n),
		\end{cases}
	\end{equation}
	and set 
	$ u^{n+\frac{1}{2}}=u(t^{n+1})$ and $w^{n+\frac{1}{2}}=w(t^{n+1})$.  
	
	$\textbf{Substep 2}$: Solve
	\begin{equation}\label{step2}
		\begin{cases}
			\dfrac{\partial u}{\partial t}=0, \   \mbox{ in } \ \Omega\times(t^n, t^{n+1}),\\
			\dfrac{\partial w}{\partial t}+\gamma(w-u^{n+\frac{1}{2}})=0, \  \mbox{ in } \ \Omega\times(t^n, t^{n+1}),\\
			(u(t^n),w(t^n))=(u^{n+\frac{1}{2}}, w^{n+\frac{1}{2}}),
		\end{cases}
	\end{equation}
	and set 
	$u^{n+1}=u(t^{n+1})$ and $w^{n+1}=w(t^{n+1})$.
	
	Since problem \eqref{step2} is a linear ordinary differential equation, it has the following closed-form solution:
	\begin{equation*}
	w^{n+1}=e^{-\gamma\,t}w^n+(1-e^{-\gamma\,t})u^{n+\frac{1}{2}}.
	\end{equation*}
    
	To solve problem \eqref{step1}, we choose the one-step backward Euler scheme to discretize it, leading to a numerical scheme of the Marchuk–Yanenko type \cite{Splitting}. 
    
    The resulting operator-splitting scheme reads as
\begin{align}
&\begin{cases}
\dfrac{u^{n+1}-u^n}{\tau}-\Delta u^{n+1}=f(x,w^n), \ \mbox{ in } \ \Omega,\\
u=g, \  \mbox{ on } \ \partial\Omega,
\end{cases} \label{semiO1}\\
&w^{n+1}=e^{-\gamma\tau}w^n+(1-e^{-\gamma\tau})u^{n+1}.  \label{semiO2}
\end{align}
To initialize the scheme, we compute $u^0$ by solving 
\begin{equation}\label{initial}
		\begin{cases}
			\Delta u^{0}=0,  \ \mbox{ in } \  \Omega,\\
            u^0 = g,  \  \mbox{ on } \  \partial \Omega,
		\end{cases}
\end{equation}
and set $w^0=u^0$.

\subsection{Convergence Analysis}
We analyze the convergence of the proposed scheme (\ref{semiO1})-(\ref{semiO2}). Suppose the initial condition $(u^0,w^0)\in H_g^1(\Omega)\times L^2(\Omega)$ is given, and we consider the following weak formulation. 

For $n\geq0$, $u^{n+1} \in H^1_g(\Omega)$ satisfies
		\begin{equation}
			(u^{n+1},v)+\tau(\nabla u^{n+1},\nabla v)=(u^n,v)+\tau(f(x,w^n),v), \ \ \forall v \in H_0^1(\Omega), 
            \label{eq.split1.weak}
		\end{equation} 
and 
		\begin{equation}\label{map2}
			w^{n+1}=e^{-\gamma\tau}w^n+(1-e^{-\gamma\tau})u^{n+1}.
		\end{equation}
        
 Suppose $u^* \in H_g^1(\Omega)$ is the weak solution of equation \eqref{s1} and define $w^*=u^*$. For $(u^*,w^*)$, we have 
		\begin{equation}\label{map101} 
			(u^*,v)+\tau(\nabla u^*,\nabla v)=(u^*,v)+\tau(f(x,w^*),v), \ \ \forall v \in H_0^1(\Omega),
		\end{equation}
and 
		\begin{equation}\label{map102}
			w^*=e^{-\gamma\tau}w^*+(1-e^{-\gamma\tau})u^*.
		\end{equation}

We make the following assumption on the function $f(x,w)$:
	\begin{assumption}\label{assumption}
		There exists a constant $L$ such that for any $w_1,w_2\in L^2(\Omega)$, $f$ satisfies
		\begin{equation}\label{Lipschitz}
			\|f(x,w_1)-f(x,w_2)\|_0\leq L\|w_1-w_2\|_0.
		\end{equation} 
	\end{assumption} 
    
    Assumption \ref{assumption} assumes that $f(x,w)$ is Lipschitz with respect to its second argument.  We have the following theorem. 
    
\begin{theorem}\label{theorem}
Let $C_1$ be the constant in the Poincar\'{e} inequality \eqref{eq.poincare}, and suppose Assumption \ref{assumption} holds. Let $\{(u^{n},w^{n})\}_{n=0}^{\infty}$ be the sequence of approximate solutions produced by scheme \eqref{eq.split1.weak}-\eqref{map2}. We have the following results:
\begin{enumerate}[label=(\roman*)]
    \item For $n \geq 0$,
\begin{equation}\label{con}
			\begin{aligned}
				\|u^{n+1}-u^*\|_0 + \|w^{n+1}-w^*\|_0\leq c^{n+1}\left(\|u^{0}-u^*\|_0 + \|w^{0}-w^*\|_0\right),
			\end{aligned}
\end{equation} 
with $c=\max\left\{(2-e^{-\gamma \tau })\left(1+\dfrac{\tau}{C_1^2}\right)^{-1},e^{-\gamma \tau }+(2-e^{-\gamma \tau })\left(1+\dfrac{\tau}{C_1^2}\right)^{-1}L\tau\right\}$. 
\item For $n \geq 1$,
\begin{equation*}
\begin{aligned}
|u^{n+1}-u^*|_1\leq c^{n+1/2}\left(\dfrac{1}{\sqrt{\tau}}+\sqrt{L}\right)\left(\|u^{0}-u^*\|_0+\|w^{0}-w^*\|_0\right).\\              
\end{aligned}
\end{equation*}
\item Assume that $4L<\gamma<\dfrac{1}{C_1^2}$ and $\tau<\dfrac{1}{\gamma}$. Then $c<1$ and
\begin{equation*}
\begin{aligned}
\lim_{n\rightarrow \infty}\|u^{n}-u^*\|_1=0, \ \lim_{n\rightarrow \infty}\|w^{n}-w^*\|_0=0. \\            
\end{aligned}
\end{equation*}

\end{enumerate}
\end{theorem}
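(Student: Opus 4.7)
The natural strategy is to derive coupled recursive bounds for the errors $e_u^{n}:=u^n-u^*$ and $e_w^{n}:=w^n-w^*$ measured in $L^2$, iterate them to get (i), revisit the energy identity to extract the $H^1$-seminorm bound (ii), and finally perform a careful scalar analysis to check $c<1$ and conclude (iii).

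First I would subtract \eqref{map101} from \eqref{eq.split1.weak} and \eqref{map102} from \eqref{map2} to obtain the weak error equation
\begin{equation*}
(e_u^{n+1},v)+\tau(\nabla e_u^{n+1},\nabla v)=(e_u^n,v)+\tau(f(x,w^n)-f(x,w^*),v),\quad\forall v\in H_0^1(\Omega),
\end{equation*}
together with $e_w^{n+1}=e^{-\gamma\tau}e_w^n+(1-e^{-\gamma\tau})e_u^{n+1}$. Because $u^{n+1}$ and $u^*$ share the same trace $g$, the test function $v=e_u^{n+1}\in H_0^1(\Omega)$ is admissible; Cauchy--Schwarz together with Assumption~\ref{assumption} yields
\begin{equation*}
\|e_u^{n+1}\|_0^2+\tau|e_u^{n+1}|_1^2\;\leq\;\bigl(\|e_u^n\|_0+L\tau\|e_w^n\|_0\bigr)\|e_u^{n+1}\|_0.
\end{equation*}
Applying Poincar\'e \eqref{eq.poincare} to the left-hand side gives $\|e_u^{n+1}\|_0\leq(1+\tau/C_1^2)^{-1}(\|e_u^n\|_0+L\tau\|e_w^n\|_0)$, while the $w$-equation immediately yields $\|e_w^{n+1}\|_0\leq e^{-\gamma\tau}\|e_w^n\|_0+(1-e^{-\gamma\tau})\|e_u^{n+1}\|_0$. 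Adding these and substituting the bound for $\|e_u^{n+1}\|_0$ into the $e_w$ estimate produces a recursion of the form $\|e_u^{n+1}\|_0+\|e_w^{n+1}\|_0\leq\alpha\|e_u^n\|_0+\beta\|e_w^n\|_0$, whose coefficients $\alpha,\beta$ are exactly the two expressions in the definition of $c$. Part (i) follows by a one-line induction with constant $c=\max(\alpha,\beta)$.

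For part (ii), I would keep the $H^1$-seminorm term in the energy identity: $\tau|e_u^{n+1}|_1^2\leq(\|e_u^n\|_0+L\tau\|e_w^n\|_0)\|e_u^{n+1}\|_0$. Inserting the $L^2$ bounds from part (i), which give $\|e_u^n\|_0+L\tau\|e_w^n\|_0\leq(1+L\tau)c^n(\|e_u^0\|_0+\|e_w^0\|_0)$ and $\|e_u^{n+1}\|_0\leq c^{n+1}(\|e_u^0\|_0+\|e_w^0\|_0)$, yields $|e_u^{n+1}|_1^2\leq\tfrac{1+L\tau}{\tau}c^{2n+1}(\|e_u^0\|_0+\|e_w^0\|_0)^2$, and the elementary inequality $\sqrt{1/\tau+L}\leq 1/\sqrt{\tau}+\sqrt{L}$ produces exactly the stated bound.

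Part (iii) requires verifying $\alpha<1$ and $\beta<1$ under the parameter hypotheses. The bound $\alpha<1$ is routine: $1-e^{-\gamma\tau}<\gamma\tau$ and $\gamma<1/C_1^2$ give $2-e^{-\gamma\tau}<1+\tau/C_1^2$. The main obstacle is $\beta<1$, because it mixes a decreasing term $e^{-\gamma\tau}$ with an increasing term containing $L\tau$. Using $2-e^{-\gamma\tau}<2$, $1+\tau/C_1^2>1+\gamma\tau$, and $L<\gamma/4$, I would reduce $\beta$ to the scalar quantity $e^{-\gamma\tau}+\gamma\tau/(2(1+\gamma\tau))<e^{-\gamma\tau}+\gamma\tau/2$. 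Setting $h(x)=e^{-x}+x/2$, I observe $h(0)=1$, $h$ is decreasing on $(0,\ln 2)$ and increasing on $(\ln 2,1]$ with $h(1)=e^{-1}+1/2<1$, so $h(x)<1$ on $(0,1]$; since $\tau<1/\gamma$ forces $\gamma\tau\in(0,1)$, we conclude $\beta<1$ and hence $c<1$. Combining (i) and (ii) then gives $\|u^n-u^*\|_1\to 0$ and $\|w^n-w^*\|_0\to 0$, completing the proof.
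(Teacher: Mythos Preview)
Your proposal is correct and follows essentially the same route as the paper: the same error equations, the same test function $v=e_u^{n+1}$, the same use of Poincar\'e to get the $L^2$ contraction, the same retention of the $\tau|e_u^{n+1}|_1^2$ term for part (ii), and the same scalar analysis for part (iii). The only cosmetic difference is that in bounding $\beta$ the paper invokes the inequality $e^{-a}\leq 1-a/2$ on $[0,1]$ directly, whereas you equivalently study the function $h(x)=e^{-x}+x/2$ on $(0,1]$; these are the same observation.
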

	\begin{proof}[Proof of Theorem \ref{theorem}]
    We prove the three statements one by one.
    
\paragraph{Proof of (i)} From the expression of \eqref{eq.split1.weak}, \eqref{map2}, \eqref{map101} and \eqref{map102}, we get 
		\begin{equation}\label{1}
			\begin{aligned}
				\|w^{n+1}-w^*\|_0\leq e^{-\gamma \tau}\|w^{n}-w^*\|_0 + (1-e^{-\gamma \tau})\|u^{n+1}-u^*\|_0\\
			\end{aligned}
		\end{equation}
		and
		\begin{equation}\label{2}
			\begin{aligned}
				&(u^{n+1}-u^*,v)+\tau(\nabla( u^{n+1}-u^*),\nabla v)=(u^{n}-u^*,v)+ \tau(f(x,w^n)-f(x,w^*),v).
			\end{aligned}
		\end{equation}
		Taking $v=u^{n+1}-u^* \in H_0^1(\Omega)$ in \eqref{2}, we obtain
        \begin{equation*}
		\begin{aligned}
			&(u^{n+1}-u^*,u^{n+1}-u^*)+\tau(\nabla( u^{n+1}-u^*),\nabla( u^{n+1}-u^*))\\
            =&(u^{n}-u^*,u^{n+1}-u^*)+ \tau(f(x,w^n)-f(x,w^*),u^{n+1}-u^*). 
		\end{aligned}
        \end{equation*}
		By Assumption \ref{assumption} and the Poincar\'{e} inequality, we deduce that 
        \begin{equation}\label{hahaha}
		\begin{aligned}
               & \|u^{n+1}-u^*\|_0^2+\dfrac{\tau\|u^{n+1}-u^*\|_0^2}{C_1^2} \\
				\leq & \|u^{n+1}-u^*\|_0^2+\tau|u^{n+1}-u^*|_1^2\\
                \leq &\|u^{n}-u^*\|_0\|u^{n+1}-u^*\|_0+ \tau\|f(x,w^n)-f(x,w^*)\|_0\|u^{n+1}-u^*\|_0 \\
				\leq &\|u^{n}-u^*\|_0\|u^{n+1}-u^*\|_0+L \tau \|w^n-w^*\|_0\|u^{n+1}-u^*\|_0,
		\end{aligned}
        \end{equation}
which can be rewritten as
		\begin{equation}\label{25}
			\begin{aligned}
				\|u^{n+1}-u^*\|_0
				\leq \left(1+\dfrac{\tau}{C_1^2}\right)^{-1}\left(\|u^{n}-u^*\|_0+L \tau \|w^n-w^*\|_0\right).\\
			\end{aligned}
		\end{equation}
Combining \eqref{1} with  \eqref{25} gives rise to
		\begin{equation*}
			\begin{aligned}
				&\|u^{n+1}-u^*\|_0 + \|w^{n+1}-w^*\|_0\\
				\leq & e^{-\gamma \tau }\|w^{n}-w^*\|_0+(2-e^{-\gamma \tau })\|u^{n+1}-u^*\|_0 \\
				\leq& (2-e^{-\gamma \tau })\left(1+\dfrac{\tau}{C_1^2}\right)^{-1}\|u^{n}-u^*\|_0+\left(e^{-\gamma \tau }+(2-e^{-\gamma \tau })\left(1+\dfrac{\tau}{C_1^2}\right)^{-1}L\tau\right)\|w^{n}-w^*\|_0.
			\end{aligned}
		\end{equation*}
        
Define the constant $c$ as
\begin{equation}
			c=\max\left\{(2-e^{-\gamma \tau })\left(1+\dfrac{\tau}{C_1^2}\right)^{-1},e^{-\gamma \tau }+(2-e^{-\gamma \tau })\left(1+\dfrac{\tau}{C_1^2}\right)^{-1}L\tau\right\}.
            \label{eq.thm.proof.c}
\end{equation}
We have
\begin{align*}
    \|u^{n+1}-u^*\|_0 + \|w^{n+1}-w^*\|_0\leq c(\|u^{n}-u^*\|_0 + \|w^{n}-u^*\|_0).
\end{align*}
Therefore, the above formula suggests that for $n\geq 0$,
		\begin{equation}\label{11}
			\begin{aligned}
				\|u^{n+1}-u^*\|_0 + \|w^{n+1}-w^*\|_0
				\leq c^{n+1}\left(\|u^{0}-u^*\|_0 + \|w^{0}-w^*\|_0\right).
			\end{aligned}
		\end{equation}

\paragraph{Proof of (ii)} Return to formula \eqref{hahaha}. It implies that
\begin{equation}\label{1212}
\begin{aligned}
\tau|u^{n+1}-u^*|_1^2
				\leq \|u^{n}-u^*\|_0\|u^{n+1}-u^*\|_0+L \tau \|w^n-w^*\|_0\|u^{n+1}-u^*\|_0.
\end{aligned}
\end{equation}
Using the result of relation \eqref{11}, we have that, for $n\geq 1$,
\begin{equation*}
			\begin{aligned}
				&\|u^{n}-u^*\|_0,\|w^{n}-w^*\|_0
				\leq c^{n}\left(\|u^{0}-u^*\|_0 + \|w^{0}-w^*\|_0\right),\\
&\|u^{n+1}-u^*\|_0,\|w^{n+1}-w^*\|_0
				\leq c^{n+1}\left(\|u^{0}-u^*\|_0 + \|w^{0}-w^*\|_0\right).
			\end{aligned}
\end{equation*}
So inequality \eqref{1212} suggests 
\begin{equation*}
\begin{aligned}
|u^{n+1}-u^*|_1^2
\leq c^{2n+1}(\dfrac{1}{\tau}+L)(\|u^{0}-u^*\|_0+\|w^{0}-w^*\|_0)^2.\\
\end{aligned}
\end{equation*}
We deduce that, for $n\geq 1$,
\begin{equation*}
\begin{aligned}
|u^{n+1}-u^*|_1\leq c^{n+1/2}\left(\dfrac{1}{\sqrt{\tau}}+\sqrt{L}\right)\left(\|u^{0}-u^*\|_0+\|w^{0}-w^*\|_0\right).\\            
\end{aligned}
\end{equation*}

\paragraph{Proof of (iii)} We derive conditions for $c<1$. 
Consider the first term in (\ref{eq.thm.proof.c}). Define
\begin{equation*}
\begin{aligned}
    g(\tau)=\dfrac{\tau}{C_1^2}-1+e^{-\gamma \tau}.
\end{aligned}
\end{equation*}
A sufficient condition for the first term in (\ref{eq.thm.proof.c}) being smaller than 1 is 
\begin{equation*}
\begin{aligned}
1-e^{-\gamma\tau}<\frac{\tau}{C_1^2},
\end{aligned}
\end{equation*}
implying $g(\tau)>0$. Note that $g(0)=0$ and $g'(\tau)=\dfrac{1}{C_1^2}-\gamma e^{-\gamma \tau}$. Choosing $\gamma<1/C_1^2$ gives rise to $g'(\tau)>0$ for $\tau> 0$, implying that $g(\tau)>0$ for any $\tau>0$.

For the second term in (\ref{eq.thm.proof.c}), suppose $\gamma\tau\leq 1$. We have
\begin{equation}\label{eq.thm.proof.c2.1}
\begin{aligned}
e^{-\gamma \tau }+(2-e^{-\gamma \tau })\left(1+\dfrac{\tau}{C_1^2}\right)^{-1}L\tau 
\leq 1-\dfrac{\gamma \tau}{2}+2L\tau 
\leq 1+(2L-\dfrac{\gamma}{2})\tau, 
\end{aligned}
\end{equation}
where the second inequality follows from the fact that $e^{-a}\leq 1-a/2$ for $0\leq a\leq 1$. A sufficient condition for (\ref{eq.thm.proof.c2.1}) being smaller than 1 is 
\begin{equation*}
\begin{aligned}
\gamma>4L.
\end{aligned}
\end{equation*}
Combining the above conditions together, we have $c<1$ if  $4L<\gamma<1/C_1^2$ and $\tau<1/\gamma$. Thus, we can obtain that 
\[\lim_{n\rightarrow \infty}\|u^{n}-u^*\|_1=0\quad\mbox{and}\quad 
\lim_{n\rightarrow \infty}\|w^{n}-w^*\|_0=0.\]
\end{proof}

Theorem \ref{theorem} suggests that the convergence of our iterative algorithm mainly relies on the Lipschitz constant $L$ and the time step $\tau$. When $4L< 1/C_1^2$ and $\gamma$ is chosen so that $4L< \gamma< 1/C_1^2$, our algorithm converges with sufficiently small $\tau$. By definition of the constant $c$ in Theorem \ref{theorem}, we know that, if $\tau$ and $\gamma$ are fixed, then the Lipschitz constant $L$ controls the convergence speed of the iterations, and a larger $L$ may even lead to divergent iterations.
    
\section{Applications of The Proposed Scheme to Fully Nonlinear Elliptic Problems}
    \label{sec.MA}
	We next apply scheme \eqref{semiO1}-\eqref{semiO2} to solve fully nonlinear elliptic problems that involve eigenvalues of the Hessian 
	$\Db^2u$. This class of equations include the Monge‐Amp\`ere equation \cite{villani2008optimal, Boonkkamp}, the Pucci’s equation \cite{Caffarelli2}, and the Minkowski problem \cite{cheng1976regularity}. 
	
	Denote the Hessian matrix of $u$ in a two-dimensional domain by
	\begin{equation*}
		\begin{aligned}
			\Db^2u=\begin{pmatrix}
				\dfrac{\partial^2u}{\partial x_1^2} & \dfrac{\partial^2u}{\partial x_1\partial x_2}\\
				\dfrac{\partial^2u}{\partial x_1\partial x_2} & \dfrac{\partial^2u}{\partial x_2^2}\\
			\end{pmatrix}.
		\end{aligned}
	\end{equation*}
The eigenvalues of $\Db^2u$, denoted by  $\lambda_1$ and $\lambda_2$
with $\lambda_1\geq\lambda_2$, can be computed as   
\begin{align}
    \lambda_1=\dfrac{1}{2}\left(\Delta u+\sqrt{|\Delta u|^2-4\det\Db^2u}\right), \quad
    \lambda_2=\dfrac{1}{2}\left(\Delta u-\sqrt{|\Delta u|^2-4\det\Db^2u}\right).
    \label{lambda}
\end{align}
Based on \eqref{lambda}, this class of equations can be reformulated in the form of \eqref{s1}. In this section, we demonstrate how to use this strategy to apply scheme \eqref{semiO1}-\eqref{semiO2} to solve the Monge-Amp\`ere equation and the Pucci's equation. 
	
\subsection{Monge-Amp\`ere Equation}
The Monge-Amp\`ere equation with the Dirichlet boundary condition is stated as follows:
	\begin{equation}\label{ma1}
		\begin{cases}
			\det\Db^2u=f \  \mbox{ in } \ \Omega,\\
			\mbox{$u$ \  is \  convex},\\
			u=g \ \mbox{ on } \ \partial\Omega, 
		\end{cases}
	\end{equation}
	where $f>0$ and $\Omega$ is a 2-D convex domain. 

    Under appropriate conditions, the existence of a unique convex solution of (\ref{ma1}) is guaranteed by the following theorem. 
    
\begin{theorem}[Existence of Classical Solutions, Theorem 1.1 in \cite{caffarelli1984dirichlet}]\label{ex}
		Suppose \(\Omega\) is a strictly convex domain with $C^\infty$ boundary $\partial \Omega$, \(f \) and $g \in C^\infty (\overline{\Omega})$. Then problem \eqref{ma1} has a unique strictly convex solution $u \in C^\infty( \overline{\Omega})$.
\end{theorem}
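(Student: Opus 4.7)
The plan is to invoke the classical continuity-method argument underlying the cited result of \cite{caffarelli1984dirichlet}. I would embed problem \eqref{ma1} in a one-parameter family $\det \Db^2 u_t = f_t$ with boundary data $g_t$ on $\partial \Omega$, interpolating between a problem at $t=0$ with an explicit smooth strictly convex solution (for instance $f_0 \equiv 1$ with $g_0$ the trace of a quadratic polynomial) and the target problem at $t=1$, keeping $f_t > 0$ and $g_t \in C^\infty(\overline\Omega)$ throughout. Set
\[
S = \{t \in [0,1] : \text{the problem at parameter } t \text{ admits a strictly convex } u_t \in C^{2,\alpha}(\overline{\Omega})\},
\]
and aim to show $S = [0,1]$. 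Uniqueness at each $t$ follows from the comparison principle: writing $\det \Db^2 u_1 - \det \Db^2 u_2$ as the integral of its linearization along the segment $\sigma u_1 + (1-\sigma) u_2$ shows that $u_1 - u_2$ solves a linear uniformly elliptic equation with zero boundary data, hence vanishes.

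Openness of $S$ would follow from the implicit function theorem, since the linearization of $u \mapsto \det \Db^2 u$ at a strictly convex $u_t$ is $L_t v = \mathrm{tr}\bigl((\mathrm{cof}\, \Db^2 u_t)\,\Db^2 v\bigr)$, which is uniformly elliptic with smooth coefficients and therefore an isomorphism $C^{2,\alpha}_0(\overline{\Omega}) \to C^{0,\alpha}(\overline{\Omega})$ by classical Schauder theory. Closedness requires uniform a priori bounds $\|u_t\|_{C^{2,\alpha}(\overline{\Omega})} \leq C$ independent of $t$, after which Arzel\`a--Ascoli extracts a limit solution. These bounds I would assemble in the standard order: $C^0$ estimates through a convex upper barrier and a concave lower barrier built from the strict convexity of $\partial \Omega$; gradient estimates reduced to the boundary by convexity; interior second-derivative estimates via Pogorelov's cutoff argument $(1-|x|/R)^2 u_{\xi\xi} \le C$; boundary second-derivative estimates through a tangential--tangential/tangential--normal/normal--normal decomposition; and finally $C^{2,\alpha}$ regularity by the Evans--Krylov theorem applied to the concave operator $\log \det \Db^2 u$.

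The main obstacle will be the global $C^2$ estimate up to $\partial \Omega$. The interior Pogorelov estimate and the Evans--Krylov step are by now textbook, but the full boundary Hessian bound requires the intricate barrier constructions of Caffarelli--Nirenberg--Spruck, and it is precisely here that both strict convexity and $C^\infty$ smoothness of $\partial\Omega$ are used in an essential way (to build quadratic barriers whose level sets separate from $\partial \Omega$, and to differentiate the equation tangentially without losing regularity). Once the uniform $C^{2,\alpha}$ bound is in place, $S$ is closed, so $S=[0,1]$ and the target problem has a strictly convex solution in $C^{2,\alpha}(\overline{\Omega})$. Promotion to $C^\infty$ is then a routine bootstrap: differentiating the equation shows that any derivative of $u$ satisfies a linear uniformly elliptic equation whose coefficients gain regularity at each step, so repeated application of Schauder estimates yields $u \in C^{k,\alpha}(\overline{\Omega})$ for every $k$, completing the proof.
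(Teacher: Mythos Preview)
The paper does not prove this statement at all: it is quoted verbatim as Theorem~1.1 of \cite{caffarelli1984dirichlet} and used as a black box to justify Corollary~\ref{ma-le}. So there is no ``paper's own proof'' to compare against. Your sketch of the continuity method with Pogorelov interior estimates, Caffarelli--Nirenberg--Spruck boundary $C^2$ barriers, Evans--Krylov, and Schauder bootstrapping is indeed the standard route to this result and is a fair summary of how the cited reference establishes it, but for the purposes of this paper no proof is expected or required.
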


In order to apply scheme \eqref{semiO1}-\eqref{semiO2}, we utilize (\ref{lambda}) and the relation $\det\Db^2u=\lambda_1\lambda_2=f$ to rewrite \eqref{ma1} as
\begin{equation}\label{ma2}
		\begin{cases}
			-\Delta u=-\sqrt{|\Delta u|^2-4\det\Db^2u+4f}\  \mbox{ in } \  \Omega,\\
			u=g\  \mbox{ on } \  \partial \Omega,
		\end{cases}
	\end{equation}
    which is similar to the form used in \cite{two}.
	By Theorem \ref{ex}, we can prove that equations \eqref{ma1} and \eqref{ma2} are equivalent.
	\begin{corollary}\label{ma-le}
		Suppose \(\Omega\) is a strictly convex domain with $C^\infty$ boundary $\partial \Omega$, \(f \) and $g \in C^\infty (\overline{\Omega})$.
		Problem \eqref{ma2} has a unique solution $u \in C^\infty( \overline{\Omega})$, which is also the convex solution of problem \eqref{ma1}.
	\end{corollary}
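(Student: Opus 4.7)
The plan is to prove equivalence of problems \eqref{ma1} and \eqref{ma2} via a two-way implication, then invoke Theorem \ref{ex} to transfer existence, uniqueness, and regularity. The key algebraic identity that drives everything is
\begin{equation*}
|\Delta u|^2 - 4\det\Db^2u = (\lambda_1+\lambda_2)^2 - 4\lambda_1\lambda_2 = (\lambda_1-\lambda_2)^2,
\end{equation*}
so the discriminant inside the square root in \eqref{ma2} equals $(\lambda_1-\lambda_2)^2 + 4f$, which is strictly positive since $f>0$. This makes \eqref{ma2} well-defined for any $u \in C^2(\overline\Omega)$.

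First, I would show that the unique strictly convex classical solution $u^*\in C^\infty(\overline\Omega)$ of \eqref{ma1}, whose existence is guaranteed by Theorem \ref{ex}, satisfies \eqref{ma2}. Since $u^*$ is strictly convex, both eigenvalues $\lambda_1,\lambda_2$ of $\Db^2u^*$ are positive, hence $\Delta u^* = \lambda_1+\lambda_2 \ge 0$ and $|\Delta u^*| = \Delta u^*$. Combined with $\det\Db^2u^* = f$, this gives $\sqrt{|\Delta u^*|^2 - 4\det\Db^2u^* + 4f} = \sqrt{|\Delta u^*|^2} = \Delta u^*$, so \eqref{ma2} reduces to the tautology $-\Delta u^* = -\Delta u^*$. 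The boundary condition is identical in both problems.

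Second, I would show conversely that any classical solution $u$ of \eqref{ma2} is automatically a convex solution of \eqref{ma1}. Squaring both sides of \eqref{ma2} eliminates the square root and yields $\det\Db^2u = f$ pointwise in $\Omega$, recovering the Monge-Amp\`ere equation. To recover convexity, I would observe that \eqref{ma2} forces $-\Delta u \le 0$ (the right-hand side is non-positive), i.e., $\lambda_1 + \lambda_2 \ge 0$. Since the product $\lambda_1\lambda_2 = \det \Db^2u = f > 0$ is strictly positive, the two eigenvalues share a common sign; together with $\lambda_1+\lambda_2 \ge 0$ this forces both to be strictly positive, yielding strict convexity of $u$. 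Hence $u$ satisfies \eqref{ma1} in the convex sense.

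Combining the two directions, solutions of \eqref{ma2} coincide with strictly convex solutions of \eqref{ma1}, so existence, uniqueness, and $C^\infty(\overline\Omega)$-regularity follow directly from Theorem \ref{ex}. I expect no serious obstacle: the only delicate point is extracting convexity rather than merely $\Delta u \ge 0$ from \eqref{ma2}, which is handled cleanly by the eigenvalue sign argument above and critically relies on the hypothesis $f>0$.
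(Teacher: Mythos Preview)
Your proposal is correct and follows essentially the same approach as the paper: a two-way implication showing that classical solutions of \eqref{ma2} coincide with strictly convex solutions of \eqref{ma1}, followed by an appeal to Theorem~\ref{ex} for existence, uniqueness, and regularity. Your explicit identification of the discriminant as $(\lambda_1-\lambda_2)^2+4f>0$ and the squaring step are slightly more detailed than the paper's presentation, but the logic and the key eigenvalue sign argument (nonnegative trace plus positive determinant forces strict convexity) are identical.
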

	\begin{proof} [Proof of Corollary \ref{ma-le}]   
		On the one hand, let $u^*$ be the strictly convex solution of \eqref{ma1}. Then $\Db^2u^*$ is positive definite and we have 
		\begin{equation*}
			\sqrt{|\Delta u^*|^2-4\det\Db^2u^*+4f}= \sqrt{|\Delta u^*|^2}=\Delta u^*>0.
		\end{equation*}
The solution of problem \eqref{ma1} is also the solution of problem \eqref{ma2}.
		
		On the other hand, let $u^*$ be the classical solution of problem \eqref{ma2}. Then we have  
		\begin{equation*}
			\lambda_1(\Db^2u^*)+\lambda_2(\Db^2u^*)=\Delta u^*=\sqrt{|\Delta u^*|^2-4\det\Db^2u^*+4f}\geq 0
		\end{equation*}
		and $u^*$ satisfies $\det\Db^2u^*=f$, i.e.
		\begin{equation*}    
		\lambda_1(\Db^2u^*)\lambda_2(\Db^2u^*)=\det\Db^2u^*=f>0.
		\end{equation*}
		We deduce that $u^*$ is the strictly convex classical solution of problem \eqref{ma1}. 
        
        Thus equations \eqref{ma1} and \eqref{ma2} have the same set of solutions. The uniqueness of solution of problem \eqref{ma1} implies that the problem \eqref{ma2} has a unique solution.
	\end{proof}
	
Corollary \ref{ma-le} demonstrates that the convexity of the solution is implied in the reformulation. Thus it is unnecessary to impose convexity by a projection step as suggested in \cite{Liu2,Liu3}; see the following remark.
	
	\begin{remark}
In \cite{Liu2}, the numerical approach for the Monge-Amp\`ere equation is based on the following reformulation:
		\begin{equation}\label{cofactor}
			\begin{aligned}	
				-\nabla\cdot({\rm cof}(\mathbf{D}^2u)\nabla u)+2f=0,
			\end{aligned}
		\end{equation}
		where ${\rm cof}(\mathbf{D}^2u)$ is the cofactor matrix of $\Db^2u$. If a convex solution $u$ of the Monge‐Amp\`ere equation  satisfies
		\eqref{cofactor}, the concave solution $-u$
		also  satisfies \eqref{cofactor} when the boundary condition $g=0$ on $\partial \Omega$. As a result, it is necessary for the algorithm in \cite{Liu2} to enforce the convexity of numerical solutions via a projection step.
	\end{remark}
	
	Based on the reformulation (\ref{ma2}), we introduce  an auxiliary function $w=u$ and define 
    $$F(\Db^2w)=-\sqrt{|\Delta w|^2-4\text{det}\mathbf{D}^2w+4f}.$$ Problem (\ref{ma2}) is equivalent to the following system of PDEs 
	\begin{equation}\label{system}
		\begin{cases}
        \begin{cases}
			-\Delta u= F(\Db^2w)
			 \  \mbox{ in } \ \Omega,\\
			u=g \   \mbox{ on } \  \partial \Omega, \\
        \end{cases}\\
			w=u, \  \mbox{ in } \ \Omega.\\
		\end{cases}
	\end{equation}
Given an initial condition $(u^0, w^0) = (u_0, w_0)$, applying scheme \eqref{semiO1}-\eqref{semiO2} to \eqref{system} leads to the following: for $n > 0$,
	\begin{align}\label{maO1}
		&\begin{cases}
			\dfrac{u^{n+1} - u^n}{\tau} - \Delta u^{n+1} = F(\Db^2w^n) \ \mbox{ in } \  \Omega,\\
			u = g \  \mbox{ on } \  \partial \Omega,
		\end{cases}\\
        &w^{n+1} = e^{-\gamma \tau} w^n + (1 - e^{-\gamma \tau}) u^{n+1}.\label{maO2}
	\end{align}
To initialize the scheme \eqref{maO1} and \eqref{maO2}, we compute $u^0$ by solving 
$\Delta u^{0}=f$
and setting $w^0=u^0$.

The following theorem shows that the solution $u^*$ of \eqref{ma1} is a steady state solution of scheme (\ref{maO1})-(\ref{maO2}):
	\begin{theorem}\label{le2}
		If the solution $u^*$ of \eqref{ma1} is in $C^\infty(\overline{\Omega})$ and $w^*=u^*$, then $(u^*,w^*)$ is a steady state solution of scheme (\ref{maO1})-(\ref{maO2}). 
	\end{theorem}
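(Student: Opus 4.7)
The plan is to show that substituting $u^n = u^{n+1} = u^*$ and $w^n = w^{n+1} = w^* = u^*$ into both equations of the scheme \eqref{maO1}--\eqref{maO2} yields identities, so $(u^*, w^*)$ is fixed by one iteration. This is essentially a direct verification, and the essential ingredient is Corollary \ref{ma-le}, which says any classical solution of \eqref{ma1} also solves the reformulation \eqref{ma2}.

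First I would invoke Corollary \ref{ma-le} to conclude that the strictly convex solution $u^* \in C^\infty(\overline{\Omega})$ of \eqref{ma1} satisfies
\begin{equation*}
    -\Delta u^* = -\sqrt{|\Delta u^*|^2 - 4\det \mathbf{D}^2 u^* + 4f} = F(\mathbf{D}^2 u^*) \quad \text{in } \Omega,
\end{equation*}
together with the boundary condition $u^* = g$ on $\partial\Omega$. Since $w^* = u^*$, the right-hand side equals $F(\mathbf{D}^2 w^*)$.

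Next I substitute $u^n = u^*$ and $w^n = w^*$ into \eqref{maO1} with the trial value $u^{n+1} = u^*$. The time-difference term $(u^{n+1} - u^n)/\tau$ vanishes, and the remaining identity $-\Delta u^* = F(\mathbf{D}^2 w^*)$ is precisely what was established in the previous step, while the boundary trace is satisfied by construction. Hence $u^* $ is indeed the unique solution of the linear elliptic boundary-value problem \eqref{maO1} with data $(u^n, w^n) = (u^*, w^*)$, and $u^{n+1} = u^*$.

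Finally I plug the same values into \eqref{maO2}:
\begin{equation*}
    w^{n+1} = e^{-\gamma \tau} w^* + (1 - e^{-\gamma \tau}) u^{n+1} = e^{-\gamma \tau} u^* + (1 - e^{-\gamma \tau}) u^* = u^* = w^*,
\end{equation*}
closing the verification. There is no genuine obstacle here; the statement is essentially a consistency check of the reformulation. The only subtlety worth flagging is that uniqueness of the solution to the linear subproblem \eqref{maO1} (a standard Lax--Milgram argument in $H^1_g(\Omega)$) is needed to conclude $u^{n+1} = u^*$ rather than just to exhibit $u^*$ as \emph{a} solution.
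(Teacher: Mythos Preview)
Your proof is correct and follows essentially the same direct verification as the paper: both substitute $(u^*,w^*)$ into the two substeps and observe that the time-difference term vanishes, the PDE part reduces to the reformulated equation \eqref{ma2} (which holds by Corollary~\ref{ma-le}), and the convex combination in \eqref{maO2} trivially fixes $w^*=u^*$. Your remark about uniqueness of the linear subproblem via Lax--Milgram is a nice addition that the paper leaves implicit.
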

	\begin{proof}[Proof of Theorem \ref{le2}]
		Since $u^*=w^*$ is the convex solution of \eqref{ma1} with the associated boundary condition $g$, we have
		\begin{equation*}
			|\Delta w^*|^2-4\text{det}\mathbf{D}^2w^*+4f>0.
		\end{equation*}
		Then $F(\Db^2w^*)=-\sqrt{|\Delta w^*|^2-4\text{det}\mathbf{D}^2w^*+4f}$ is a real function. As $u^*-\tau\Delta u^*=u^*+\tau F(\Db^2w^*)$ and $w^*=e^{-\gamma\tau}w^*+(1-e^{-\gamma\tau})u^*$ are satisfied, $(u^*,w^*)$ is a steady state solution of scheme (\ref{maO1})-(\ref{maO2}).
	\end{proof}

\subsection{Pucci's Equation}
The Pucci's equation, defined by a linear combination of the Pucci's extremal operators, is another fully nonlinear equation. 

\begin{definition}[Pucci's extremal operators \cite{Caffarelli1}]
Letting $0<a<A$, Pucci's extremal operators are defined by
\begin{equation*}
\mathcal{M}^{\pm}_{a,A}(M) = A \sum_{\pm \lambda_i > 0} \lambda_i + a \sum_{\pm \lambda_i < 0} \lambda_i, 
\end{equation*}
where $M$ is a $N \times N$ symmetric matrix and $\{\lambda_i\}_{i=1}^N$ denote its eigenvalues.
\end{definition}

In the case $d = 2$, the Pucci's (maximal) equation for $u$ takes the following form with a Dirichlet boundary condition:
	\begin{equation}\label{pucci}
		\begin{cases}
			\alpha \lambda_1+\lambda_2=0 \ \mbox{ in }  \ \Omega, \\
			u=g \ \mbox{ on } \  \partial\Omega,
		\end{cases}
	\end{equation}
	where $\alpha \in (1,+\infty)$, and $\lambda_1\geq \lambda_2$ are the two eigenvalues of $\Db^2u$. If $\alpha=1$, the Pucci's equation reduces to a Poisson-Dirichlet problem. If $\alpha>1$, the Pucci's equation implies that
 $ \lambda_1\geq0$ and $\lambda_2\leq 0. $

	Applying relation \eqref{lambda}, we rewrite the Pucci's equation in the form of \eqref{s1} as:
	\begin{equation*}
		\begin{cases}
			-\Delta u=\dfrac{\alpha-1}{\alpha+1}\sqrt{|\Delta u|^2-4\det\Db^2u}\  \mbox{ in } \  \Omega,\\
			u=g\  \mbox{ on } \ \partial \Omega.
		\end{cases}
	\end{equation*}
	We introduce an auxiliary function $w=u$ and define  $$F(\Db^2w)=\dfrac{\alpha-1}{\alpha+1}\sqrt{|\Delta w|^2-4\det\Db^2w}.$$ 
    Then we have the following PDE system:
	\begin{equation*}
		\begin{cases}
        \begin{cases}
			-\Delta u= F(\Db^2w)
			 \ \mbox{ in } \ \Omega,\\
            u=g\  \mbox{ on } \ \partial \Omega, \\
            \end{cases}\\
				w=u, \  \mbox{ in } \ \Omega.\\
		\end{cases}
	\end{equation*}
Given an initial condition $(u^0, w^0) = (u_0, w_0)$,  applying scheme \eqref{semiO1} and \eqref{semiO2} to the above system gives rise to the following: for $n > 0$,
	\begin{align}\label{pucciO1}
		&\begin{cases}
			\dfrac{u^{n+1} - u^n}{\tau} - \Delta u^{n+1} = F(\Db^2w^n) \  \mbox{ in } \  \Omega,\\
			u = g \ \mbox{ on } \  \partial \Omega,\\
		\end{cases}\\
        &w^{n+1} = e^{-\gamma \tau} w^n + (1 - e^{-\gamma \tau}) u^{n+1}.\label{pucciO2}
	\end{align}
In our implementation, we compute the initial condition $u^0$ by formula \eqref{initial} and setting $w^0=u^0$.

Following the proof of Theorem \ref{le2}, we can show that the solution of  (\ref{pucci}) is a steady state solution of scheme (\ref{pucciO1})-(\ref{pucciO2}).

\section{Finite Element Approximation}\label{sec.FEM}
The shared structure of equations \eqref{semiO1}, \eqref{maO1}, and \eqref{pucciO1} motivates us to use the mixed finite element method to approximate both the solution $(u,w)$ and the Hessian matrix $\Db^2w$. 

\subsection{Finite Element Space}
We first introduce the linear finite-element space that we will use. Let $\mathcal{T}_h$ be a quasi-uniform triangulation of the domain $\Omega$ as described in \cite{Brennerbook}, where $h$ denotes the discretization parameter. 

We use $\sum_h=\{Q_k\}_{k=1}^{N_{h}}$ to denote the set of nodes and
$\sum_{0h}=\{Q_k\}_{k=1}^{N_{0h}}$ to denote the set of interior nodes on $\mathcal{T}_h$, respectively. The boundary nodes are represented by $\{Q_k\}_{k=N_{0h}+1}^{N_{h}}$.

Let $V_h$ be the piecewise-linear Lagrange finite-element space defined on $\mathcal{T}_h$. We next define some function spaces that we will use:
\begin{equation*}
\begin{aligned}
			V_{gh}&=\left\{v|v\in V_h,\; v(Q_k)=g(Q_k),\;\forall k=N_{0h}+1,\cdots,N_{h}  \right\},\\
			V_{0h}&=V_h\cap H_0^1(\Omega).\\
\end{aligned}
\end{equation*}
Each vertex $Q_j$ has a corresponding basis function $\phi_j$ such that
\begin{equation*}
\phi_j(Q_j)=1,\;\; \phi_j(Q_k)=0,\;\;\forall k=1,\;\ldots,N_h,\; k\neq j.
\end{equation*}
The support of $\phi_j$, denoted by $\theta_j$, consists of all triangles having node $Q_j$ as a common node. The area of $\theta_j$ is denoted by $|\theta_j|$.

Here, we introduce several different types of triangulation $\mathcal{T}_h$ of $\Omega$, as illustrated in Figure \ref{mesh}, and these meshes will be used in our numerical experiments. 

\begin{figure}[t!]
\centering
\begin{tabular}{ccccc}	
(a)&(b) &(c)\\
\includegraphics[clip, trim = {80 20 60 10}, width=0.3\textwidth]{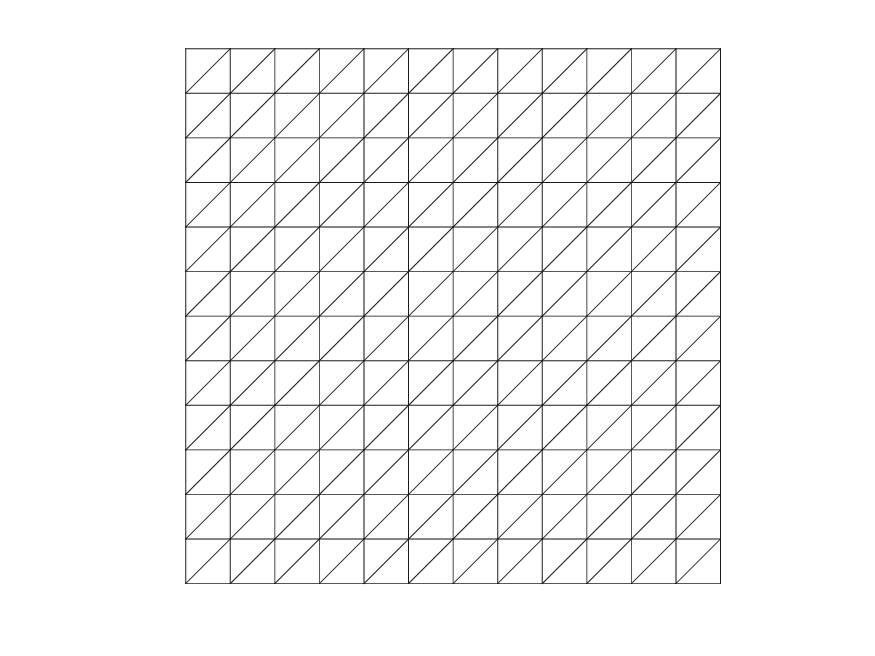}  & 
\includegraphics[clip, trim = {80 20 60 10}, width=0.3\textwidth]{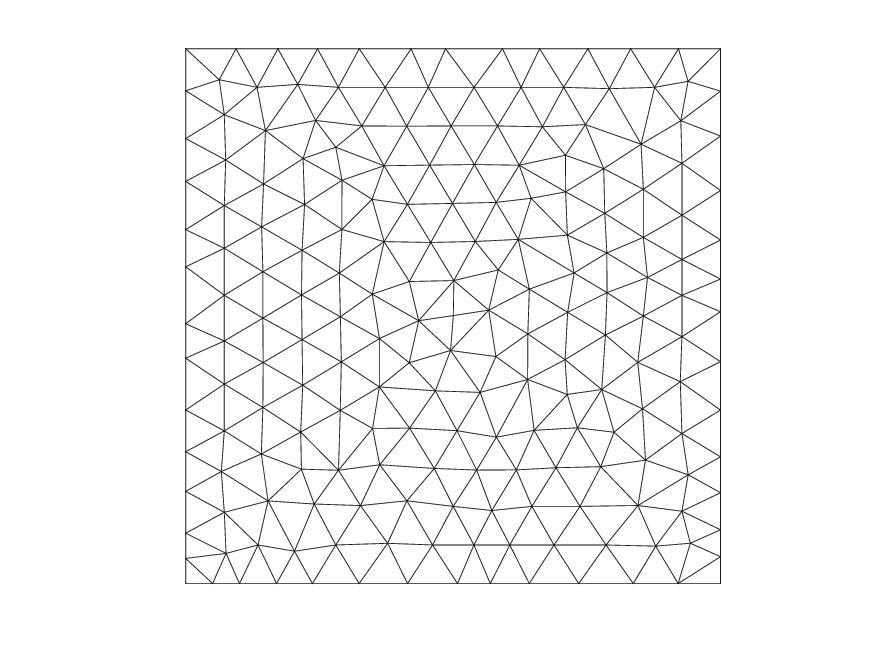} & 
\includegraphics[clip, trim = {80 20 60 10}, width=0.3\textwidth]{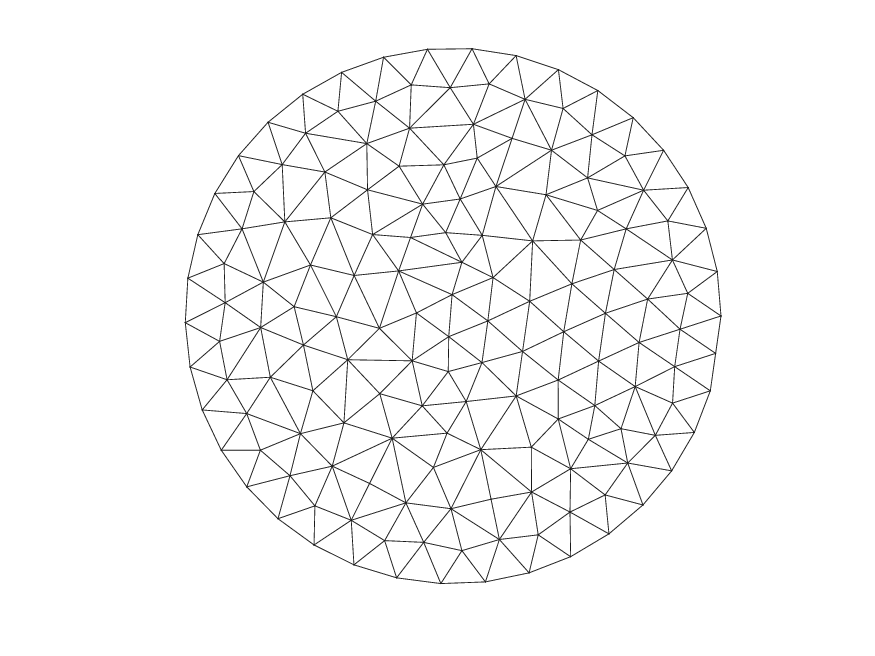}
\end{tabular}	
\caption{(a) Regular mesh on the unit square. (b) Unstructured mesh on the unit square. (c) Unstructured mesh on the half-unit disk.}
\label{mesh}
\end{figure}

	\subsection{Approximations of Second Order Derivatives}
	For a function $\psi \in H^2(\Omega)$, the divergence theorem implies that, for $\forall i,j=1,2$
	\begin{equation}\label{Green} 
		\int_{\Omega} \frac{\partial^2 \psi}{\partial x_i \partial x_j}\phi dx=-\dfrac{1}{2}\int_{\Omega} \frac{\partial \psi}{\partial x_i}\frac{\partial \phi}{\partial x_j}+\frac{\partial \psi}{\partial x_j}\frac{\partial \phi}{\partial x_i}dx,\
		\forall \phi \in H^1_0(\Omega).
	\end{equation}
	For $\psi \in V_h$, we define discrete analogues of the differential operator $D^2_{ijh}$ based on the relation \eqref{Green} as an approximation of $\frac{\partial^2}{\partial x_i \partial x_j}$. We do this by finding a function $D^2_{ijh}(\psi) \in V_{0h}$ that satisfies the following condition:
	\begin{equation}\label{a1}
		\int_{\Omega} D^2_{ijh}(\psi)\phi dx=-\dfrac{1}{2}\int_{\Omega} \frac{\partial \psi}{\partial x_i}\frac{\partial \phi}{\partial x_j}+\frac{\partial \psi}{\partial x_j}\frac{\partial \phi}{\partial x_i}dx,\  \forall \phi \in V_{0h}.
	\end{equation} 
    
It was mentioned in \cite{Caboussat3} that the approximation formula  \eqref{a1} has two potential weaknesses: 
(i) While the approximation of the Hessian \eqref{a1} has small error at interior nodes, the zero Dirichlet boundary condition results in significant errors at boundary nodes, thereby losing a substantial amount of valuable boundary information.
(ii) As stated in \cite{Picasso}, the above Hessian recovery technique using linear finite elements has no convergence in general and strongly relies on the types of meshes. The approximation \eqref{a1} only works well on regular meshes (see Figure \ref{mesh}(a)).

In our computational experiments, we employ the trapezoidal rule to approximate the integrals. The boundary values of $u$ are set to match 
$g$, and the computation of interior values of $u$ only utilizes interior values of $F(\Db^2w)$ in scheme \eqref{maO1} and \eqref{pucciO1}, independent of the boundary values of the Hessian.
Consequently, the approximation formula \eqref{a1} provides sufficient accuracy for our algorithm when applied on regular meshes. 

However, a simple (Tikhonov) regularization must be performed to ensure convergence in the case of unstructured meshes, as visualized in Figure \ref{mesh}(b) and (c). The boundary values of the numerical  Hessian produced by formula \eqref{a1} influence the interior values of the regularized Hessian. Therefore, it is crucial to make a better treatment for boundary nodes of the numerical Hessian before proceeding with regularization.

Since the values of the interior vertices of \( D^2_{ijh}(\psi) \) (defined in (\ref{a1})) have high accuracy, we recompute the boundary values of \( D^2_{ijh}(\psi) \) from the interior values by imposing a zero Neumann boundary condition. Specifically, the boundary values of \( D^2_{ijh}(\psi) \) is updated by solving
	\begin{equation}\label{Neumann}
		\nabla D^2_{ijh}(\psi)\cdot \textbf{n}=0,\ \mbox{on} \  \partial \Omega,
	\end{equation}
	where $\textbf{n}=(n_1,n_2)$ denotes the unit outward normal vector of $\partial \Omega$.
The detailed procedure for imposing zero Neumann boundary condition \eqref{Neumann} is explained as follows. 
 
Suppose values of \( D^2_{ijh}(\psi) \) at interior vertices are computed by solving \eqref{a1},  denoted by $P_k$ for $k=1,\cdots,N_{0h}$, and unknown new values of \( D^2_{ijh}(\psi) \) at boundary vertices are denoted by $P_k$ for $k=N_{0h}+1,\cdots,N_{h}$.
Let $b_1(\psi)$ and $b_2(\psi)$ be the numerical approximation of $\partial_{x_1}D^2_{ijh}(\psi)$ and $\partial_{x_2}D^2_{ijh}(\psi)$ in the linear finite-element space, respectively: Find $b_1(\psi)$ and $b_2(\psi)$ $\in$ $V_h$, satisfying
	\begin{equation}\label{n1}
		\int_{\Omega} b_1(\psi)\phi dx=\int_{\Omega} \partial_{x_1}D^2_{ijh}(\psi)\phi dx,\quad \forall \phi \in V_{h},
	\end{equation}
	\begin{equation}\label{n2}
		\int_{\Omega} b_2(\psi)\phi dx=\int_{\Omega} \partial_{x_2}D^2_{ijh}(\psi)\phi dx,\quad \forall \phi \in V_{h}.
	\end{equation}
Since only boundary values of $b_1(\psi)$ and $b_2(\psi)$ are needed according to (\ref{Neumann}), we set the test functions $\phi$ to be the basis functions $\phi_k$, where $k=N_{0h}+1, \cdots, N_h$. 

Next, we apply the trapezoidal rule to approximate the integrals in \eqref{n1} and take test functions $\phi_k$ associated with vertex $Q_k$ on $\partial \Omega$, and the value of $b_1(\psi)(Q_k)$ is given by
\begin{equation}\label{b1}
\begin{aligned}
b_1(\psi)(Q_k)&=\dfrac{3}{|\theta_k|}\int_{\Omega}\partial_{x_1}D^2_{ijh}(\psi)\phi_kdx\\
&=\dfrac{3}{|\theta_k|}\int_{\Omega}\partial_{x_1}\left(\sum_{j=1,\cdots,N_{0h}}P_j\phi_j+\sum_{j=N_{0h}+1,\cdots,N_{h}}P_j\phi_j\right)\phi_kdx\\
&=\dfrac{3}{|\theta_k|}\int_{\Omega}\left(\sum_{j=1,\cdots,N_{0h}}\partial_{x_1}(P_j\phi_j)\phi_k+\sum_{j=N_{0h}+1,\cdots,N_{h}}\partial_{x_1}(P_j\phi_j)\phi_k\right)dx\\
&=\dfrac{3}{|\theta_k|}\int_{\Omega}\left(\sum_{j=1,\cdots,N_{0h}}P_j\partial_{x_1}(\phi_j)\phi_k+\sum_{j=N_{0h}+1,\cdots,N_{h}}P_j\partial_{x_1}(\phi_j)\phi_k\right)dx\\  &=\dfrac{3}{|\theta_k|}\sum_{j=1,\cdots,N_{0h}}P_j\int_{\Omega}\partial_{x_1}(\phi_j)\phi_kdx+\dfrac{3}{|\theta_k|}\sum_{j=N_{0h}+1,\cdots,N_{h}}P_j\int_{\Omega}\partial_{x_1}(\phi_j)\phi_kdx.\\ 
\end{aligned}
\end{equation}
Expression \eqref{b1} indicates that $b_1(\psi)(Q_k)$ is a linear combination of $P_{N_{0h}+1},\cdots,P_{N_h}$ and some known constants (In fact, the linear combination only includes $P_k$ and $P_j$ defined on two adjacent boundary nodes). Similarly, the same is true for $b_2(\psi)(Q_k)$. 

Thus relation \eqref{Neumann} can be approximated as: For $\forall k=N_{0h}+1, \cdots, N_h$,
\begin{equation}\label{nn}
		\nabla D^2_{ijh}(\psi)(Q_k)\cdot \textbf{n}(Q_k)=b_1(\psi)(Q_k)n_1(Q_k)+b_2(\psi)(Q_k)n_2(Q_k)=0,
\end{equation}
where $\textbf{n}(Q_k)$ are computed in advance.
Relation \eqref{nn} leads to a linear system for $P_k$, where $k=N_{0h}+1, \cdots, N_h$.
Therefore, we can get $P_k$ for $k=N_{0h}+1, \cdots, N_h$ by solving the linear system above.

It is known that the above numerical Hessian has deteriorated accuracy when $h\rightarrow 0$ and it may completely lose accuracy on unstructured meshes \cite{Caboussat3}. Therefore, we use the Tikhonov regularization \cite{Tikhonov} to overcome this difficulty by adding some viscosity as follows:
	\begin{equation*}
		\begin{cases}
			-\epsilon\nabla^2\tilde{D}^2_{ijh}(\psi)+\tilde{D}^2_{ijh}(\psi)=D^2_{ijh}(\psi), \  \mbox{ in } \ \Omega,\\
			\dfrac{\partial \tilde{D}^2_{ijh}(\psi) }{\partial \textbf{n}}=0,\ \mbox{ in }\ \partial \Omega.\\
		\end{cases}
	\end{equation*}
	Its variational form reads as: find $\tilde{D}^2_{ijh}(\psi)\in V_{h}$, for $\forall i,j=1,2$, satisfying
	\begin{equation}\label{a3}
		\begin{aligned}
			\epsilon\int_\Omega \nabla \tilde{D}^2_{ijh}(\psi)\cdot\nabla \phi dx+\int_{\Omega}\tilde{D}^2_{ijh}(\psi) \phi dx
			=\int_{\Omega}D^2_{ijh}(\psi)\phi dx, \; \forall \phi\in V_{h},\\
		\end{aligned}
	\end{equation}
	where $\epsilon$ is of order $O(h^2)$ on unstructured meshes.

In summary, the numerical method for second-order derivatives on the unstructured meshes involves three fundamental steps: (i) Interior value: Utilize the interior value based on the divergence theorem as described in equation \eqref{a1}. (ii) Boundary condition: Impose the Neumann boundary condition using equation \eqref{Neumann}.
(iii) Tikhonov regularization: Implement the Tikhonov regularization as specified in equation \eqref{a3}. 

\begin{remark}
It is important to note that the numerical method for second-order derivatives on a regular mesh just needs to use formula \eqref{a1} without imposing the vanishing Neumann boundary condition and  Tikhonov regularization, i.e. regularization parameter $\epsilon=0$. It is because without Tikhonov regularization, boundary values of $D^2w$ aren't needed in scheme \eqref{maO1} and \eqref{pucciO1} when integrals are approximated by trapezoidal rule.
\end{remark}

\section{Finite Element Implementation of Numerical Schemes}
\label{sec.implementation}
	Now we are ready to give fully discrete schemes for the semilinear equation, the Monge-Amp\`ere equation, and the Pucci's equation, where all integrations are computed by the trapezoidal rule.
    
Let us recall that $V_h$ is the piecewise continuous linear Lagrange finite element space, 
	$V_{gh}=\left\{v|v\in V_h, v(Q_k)=g(Q_k),\forall k=N_{0h}+1,\cdots,N_{h}  \right\}$, and
	$ V_{0h}=V_h\cap H_0^1(\Omega).$   
    
\subsection{Implementation of Scheme \eqref{semiO1}-\eqref{semiO2}}

Given an initial condition $(u^0,w^0)$, for $n\geq 0$, the scheme \eqref{semiO1}-\eqref{semiO2} for the semilinear elliptic equation is discretized as follows:

\noindent $\textbf{Substep 1}$:
	For any $v\in V_{0h}$, find $u^{n+1} \in V_{gh}$ satisfying
	\begin{equation}\label{S1}
		\begin{aligned}
			\int_{\Omega} u^{n+1}vdx+\tau \int_{\Omega}\nabla u^{n+1}\cdot\nabla vdx=\int_{\Omega} u^{n}vdx+\tau
			\int_{\Omega}f(x,w^n)vdx.
		\end{aligned}
	\end{equation}
$\textbf{Substep 2}$: Compute $w^{n+1} \in V_h$ by
	\begin{equation}\label{S2}
		w^{n+1}(Q_k)=e^{-\gamma\tau}w^n(Q_k)+(1-e^{-\gamma\tau})u^{n+1}(Q_k), \ \forall k=1,\cdots,N_{h}.
	\end{equation} 
    
\subsection{Implementation of Scheme \eqref{maO1}-\eqref{maO2}}
Given an initial condition $(u^0,w^0)$, for $n\geq 0$, the scheme \eqref{maO1}-\eqref{maO2} for the Monge-Amp\`{e}re equation is discretized as follows:

\noindent $\textbf{Substep 1}$:	For any $v\in V_{0h}$, find $u^{n+1} \in V_{gh}$ satisfying that
	\begin{equation}\label{M1}
		\begin{aligned}
			\int_{\Omega} u^{n+1}vdx+&\tau \int_{\Omega}\nabla u^{n+1}\cdot\nabla vdx\\
			&=\int_{\Omega} u^{n}vdx-\tau		\int_{\Omega}\sqrt{(D_{11h}^2w^n+D_{22h}^2w^n)^2-4\det \Db_h^2w^n+4f}vdx. 
		\end{aligned}
	\end{equation} 
$\textbf{Substep 2}$: Compute $w^{n+1} \in V_h$ by
	\begin{equation}\label{M2} 
		w^{n+1}(Q_k)=e^{-\gamma\tau}w^n(Q_k)+(1-e^{-\gamma\tau})
		u^{n+1}(Q_k),\  \forall k=1,\cdots,N_{h}.
	\end{equation} 
	Above and below, $D_{11h}^2w^n$ and $D_{22h}^2w^n$ are computed by \eqref{a1}, \eqref{Neumann} and \eqref{a3} in Section 4, and 
$\Db_h^2u=
\begin{pmatrix}
D^2_{11h}(u) & D^2_{12h}(u) \\
D^2_{21h}(u) & D^2_{22h}(u)
\end{pmatrix}.
$

\subsection{Implementation of Scheme \eqref{pucciO1}-\eqref{pucciO2}}
Given an initial condition $(u^0,w^0)$, for $n\geq 0$, the scheme \eqref{pucciO1}-\eqref{pucciO2} for the Pucci's equation is discretized as follows:

\noindent $\textbf{Substep 1}$:	For any $v\in V_{0h}$, find $u^{n+1} \in V_{gh}$ satisfying that
	\begin{equation}\label{P1}
		\begin{aligned}
			\int_{\Omega} u^{n+1}vdx+&\tau \int_{\Omega}\nabla u^{n+1}\cdot\nabla vdx\\
			&=\int_{\Omega} u^{n}vdx+\tau\dfrac{\alpha-1}{\alpha+1}\int_{\Omega}\sqrt{(D_{11h}^2w^n+D_{22h}^2w^n)^2-4\det \Db_h^2w^n}vdx. 
		\end{aligned}
	\end{equation}
$\textbf{Substep 2}$: Compute $w^{n+1} \in V_h$ by
	\begin{equation}\label{P2}
		w^{n+1}(Q_k)=e^{-\gamma\tau}w^n(Q_k)+(1-e^{-\gamma\tau})
		u^{n+1}(Q_k),\  \forall k=1,\cdots,N_{h}.
	\end{equation} 
	\section{Numerical Experiments}
    \label{sec.experiments}
	We conduct a variety of numerical experiments to demonstrate the performance of our proposed algorithms.  
    We set the parameters as follows: $\tau=1$ and $\epsilon=0$ for the regular mesh as shown in Figure \ref{mesh}(a), and $\tau=1$ and $\epsilon=h^2$ for the unstructured meshes as shown in Figure \ref{mesh}(b) and (c). The stopping criterion for the proposed algorithm is set as $||u^{n+1}-u^n||_0<10^{-9}$ unless otherwise specified.     
    We test our proposed algorithms on the semilinear elliptic equation, the Monge‐Ampère equation, and the Pucci’s equation. 
	
	\subsection{Semilinear Elliptic Equation}
	We apply scheme \eqref{semiO1}-\eqref{semiO2} to solve the following semilinear elliptic equation,  
    \begin{equation}\label{Semilinear}
		\begin{cases}
			- \Delta u= L|u| - \Delta g- L|g|\  \mbox{in} \ \Omega, \\ 
			u = g(x) \ \mbox{on} \ \partial \Omega,
		\end{cases}
	\end{equation}
    where $g(x)= \cos(\pi x_1) \cos(\pi x_2)$, $\Omega$ is the unit square $(0, 1)^2$, and $L$ is the Lipschitz constant in Assumption \ref{assumption}.
    The exact solution is
	\begin{equation*}
		u = \cos(\pi x_1) \cos(\pi x_2).
	\end{equation*}
    
	We report in Table \ref{test-Semilinear} the numerical results on the regular mesh, Figure \ref{mesh}(a), and the unstructured mesh in the unit square, Figure \ref{mesh}(b), when $L=1/2$. As we are using linear finite elements, our proposed algorithm for the semilinear equation preserves the optimal convergence of order $2$ in terms of the $L^2$ error and nearly optimal rate in terms of the $L^\infty$ error on both meshes; in fact, the algorithm is optimal in terms of  $L^\infty$ error on the regular mesh as well.
    
	Our algorithm converges with only 7 iterations. 
    According to the definition of the constant $c$ in Theorem \ref{theorem}, the speed of convergence slows down as $L$ increases. We test the proposed algorithm for problem (\ref{Semilinear}) with different $L$'s on the unstructured mesh,  Figure \ref{mesh}(b). The convergence histories are presented in Figure \ref{test-Semilinear-history}. We observe that our algorithm converges slower as $L$ becomes larger, which agrees with our theory. 

	\begin{table}[t!]
    \centering
    		(a)
            \begin{tabular}{cccccc}
				\hline
				$h$&Iterations& $L^2$ error & Rate     &  $L^{\infty}$ error & Rate\\
				\hline		
				1/10&7 &2.08$\times10^{-3}$ &   &  7.45$\times10^{-3}$ &\\
				
				1/20&7 &5.21$\times10^{-4}$ & 2.00 & 1.99$\times10^{-3}$ & 1.90\\
				1/40&7 &1.30$\times10^{-4}$ &  2.00 &  5.09$\times10^{-4}$&1.97\\
				1/80&7 &3.26$\times10^{-5}$ & 2.00   & 1.28$\times10^{-4}$&1.99\\
				\hline
			\end{tabular} \\ \vspace{0.1cm}
			(b)
            \begin{tabular}{cccccc}
				\hline
				$h$&Iterations& $L^2$ error & Rate     &  $L^{\infty}$ error & Rate\\
				\hline		
				1/10&7 &1.94$\times10^{-3}$ &   &  6.22$\times10^{-3}$ &\\
				
				1/20&7 &3.93$\times10^{-4}$ & 2.30  & 1.68$\times10^{-3}$ & 1.89\\
				1/40&7 &1.22$\times10^{-4}$ &  1.69 &  4.97$\times10^{-4}$&1.76\\
				1/80&7 &2.91$\times10^{-5}$ & 2.07   & 1.52$\times10^{-4}$&1.71\\
				\hline
			\end{tabular} \\ \vspace{0.1cm}
		\caption{(Semilinear equation.) Numerical results for problem \eqref{Semilinear} with $L=1/2$ on the unit square $(0, 1)^2$. (a) The regular mesh. (b) The unstructured mesh of the unit square.}
		\label{test-Semilinear}
	\end{table}
    
	\begin{figure}[t!]
		\centering
		\begin{tabular}{cc}    
        (a) & (b) \\
			\includegraphics[width=0.4\textwidth]{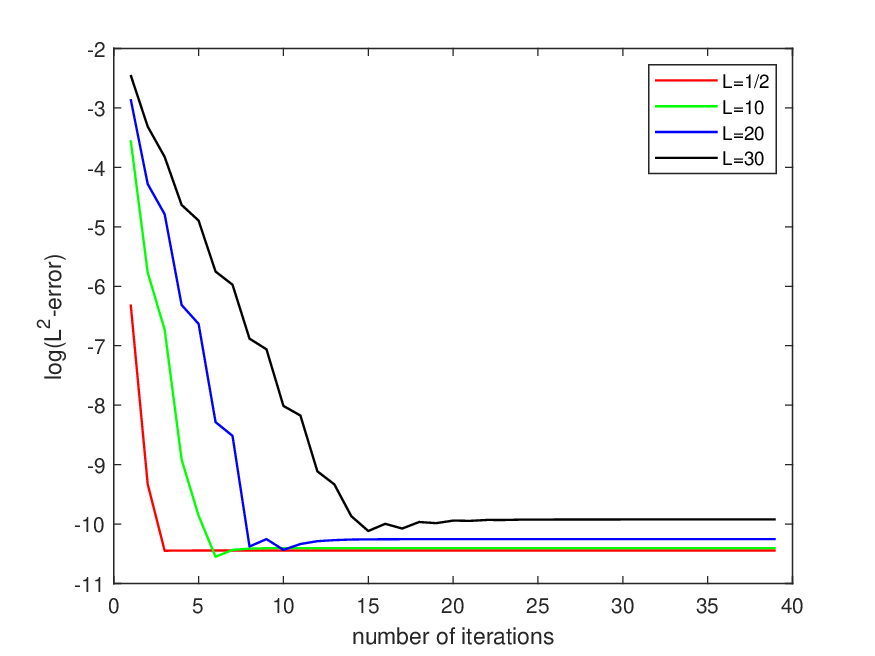} &  
			\includegraphics[width=0.4\textwidth]{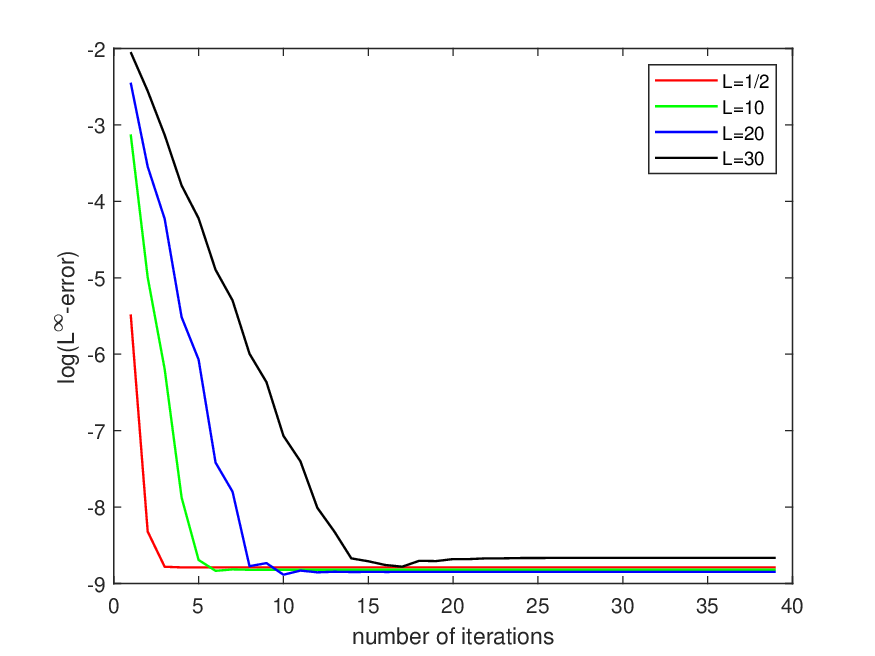} \\
		\end{tabular}    
		\caption{(Semilinear equation.) $h=1/80$. Histories of (a) $L^2$ errors and (b) $L^\infty$ errors for problem \eqref{Semilinear} on the unstructured mesh of the unit square $(0, 1)^2$. The equation with $L=\frac{1}{2}, 10, 20$, and $30$, respectively, is solved. }
		\label{test-Semilinear-history}
	\end{figure}
	
\subsection{Monge-Amp\`ere Equation}
    We apply scheme \eqref{maO1}-\eqref{maO2} to solve the Monge‐Amp\`{e}re equation, and we compare the new scheme with the direct operator splitting (DOS) method based on the divergence form \cite{Liu2}, and the nonlinear Gauss‐Seidel iteration based 
    finite-difference (FD) method \cite{two}.  In this section, the stopping criterion for the DOS algorithm is also set as $||u^{n+1}-u^n||_0<10^{-9}$; the FD method needs a smaller stopping criterion to achieve convergence, where the Gauss-Seidel iteration stops when $||u^{n+1}-u^n||_0$ is less than $10^{-14}$.
    
\subsubsection{A Quadratic Solution}
The first example for the Monge-Amp\`ere equation is defined by
\begin{equation}\label{Polynomial}
		\begin{cases}
			\det \Db^2u=256\ \mbox{in} \ \Omega,\\
			g=8\left(\beta\left(x_1-\frac{1}{2}\right)^2+\dfrac{1}{\beta}\left(x_2-\frac{1}{2}\right)^2\right)-1\ \mbox{on}\ \partial \Omega,
		\end{cases}
        \end{equation}
where $\Omega=(0,1)^2$, a unit square. The exact solution $u$ is a quadratic function given by
	\begin{equation*}
		u=8\left(\beta\left(x_1-\frac{1}{2}\right)^2+\dfrac{1}{\beta}\left(x_2-\frac{1}{2}\right)^2\right)-1 \ \mbox{in} \  \Omega.
\end{equation*}
 
		\begin{table}[t!]
		\centering
			\begin{tabular}{c|cc|cc}
					\hline
                  & \multicolumn{2}{c|}{$\beta=1$} & \multicolumn{2}{c}{$\beta=4$} \\
					\hline
					$h$& $L^2$ error &  $L^{\infty}$ error& $L^2$ error &  $L^{\infty}$ error\\
					\hline
					1/10&2.35$\times10^{-16}$ & 6.66$\times10^{-16}$&1.95$\times10^{-14}$ & 5.51$\times10^{-14}$ \\
                    1/20&3.87$\times10^{-15}$ & 1.08$\times10^{-14}$&1.95$\times10^{-14}$ & 5.51$\times10^{-14}$
                   \\
                    1/40&6.05$\times10^{-15}$ & 1.63$\times10^{-14}$&2.58$\times10^{-14}$ & 7.28$\times10^{-14}$
                   \\
                   1/80&4.56$\times10^{-14}$ & 1.18$\times10^{-13}$&2.91$\times10^{-13}$ & 7.29$\times10^{-13}$
                   \\
					\hline
				\end{tabular} \\ \vspace{0.1cm}
			\caption{(Monge‐Amp\`{e}re equation.) Numerical results for problem \eqref{Polynomial} with $\beta=1$ and $\beta=4$, respectively, on the regular mesh as shown in  Figure \ref{mesh}(a).}
            \label{Polynomial_error}
		\end{table}
    \begin{figure}[t!]
		\centering
		\begin{tabular}{cc}
           (a) & (b)\\
			\includegraphics[width=0.4\textwidth]{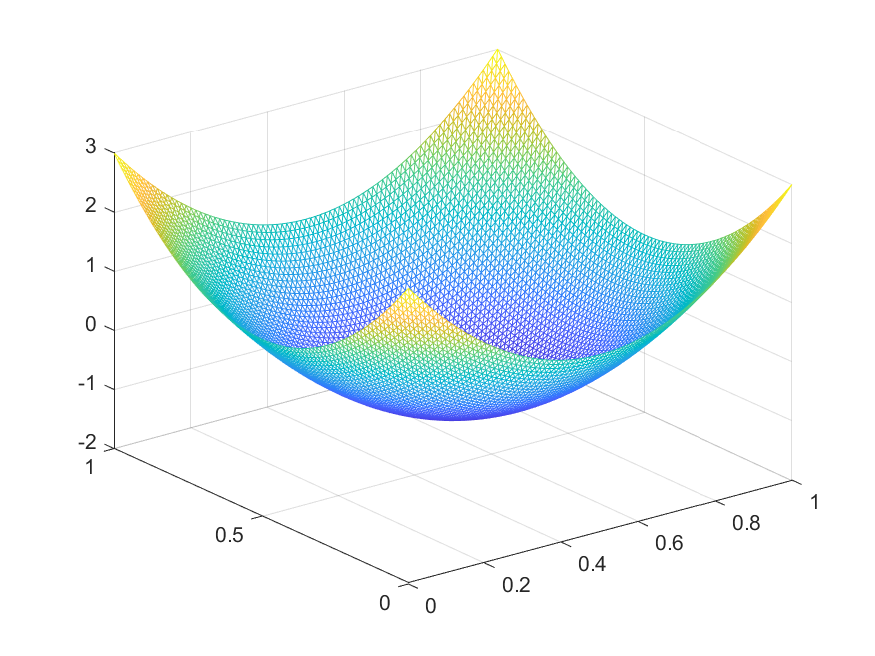} &
			\includegraphics[width=0.4\textwidth]{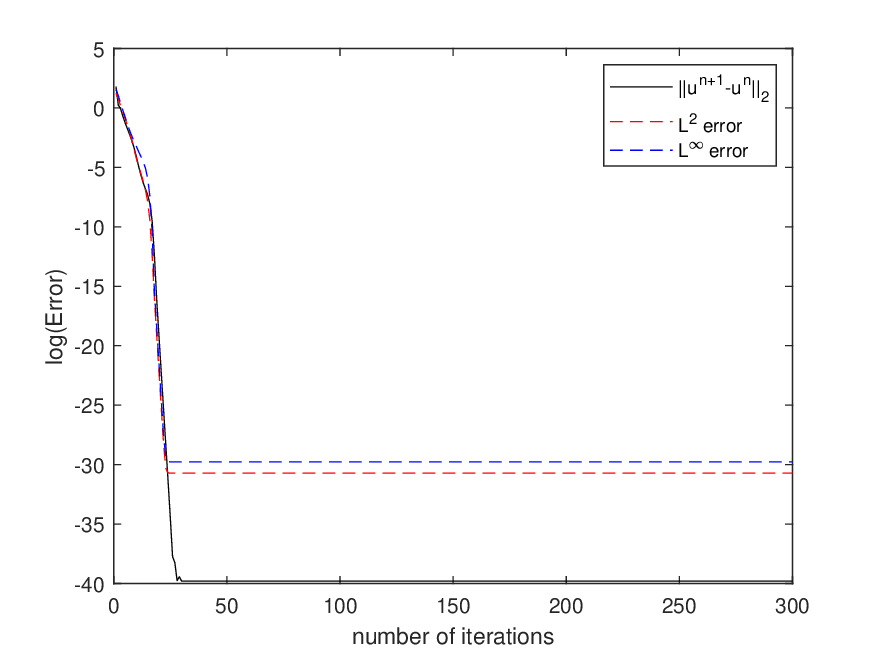}\\
            (c) & (d)\\
			\includegraphics[width=0.4\textwidth]{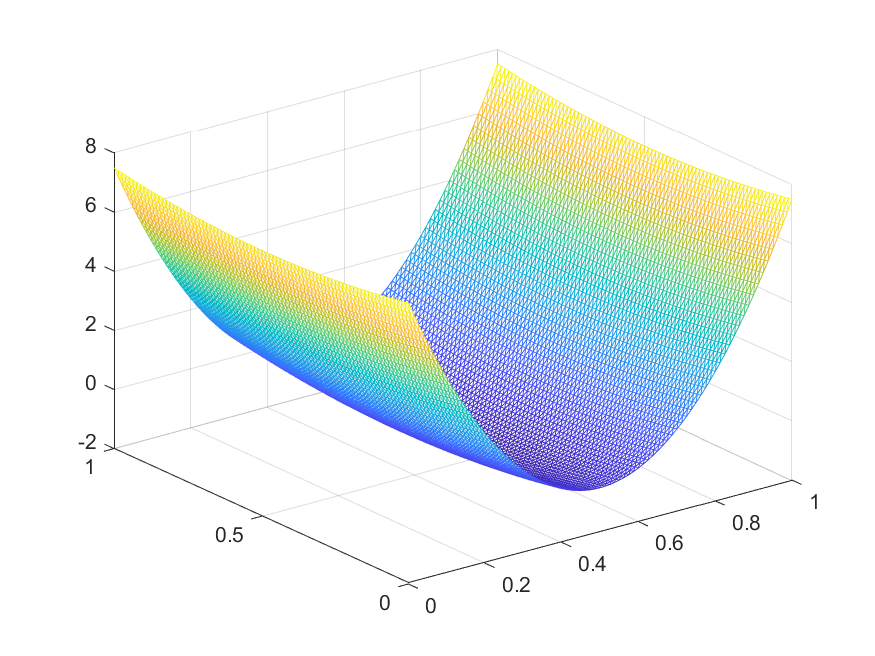} &
			\includegraphics[width=0.4\textwidth]{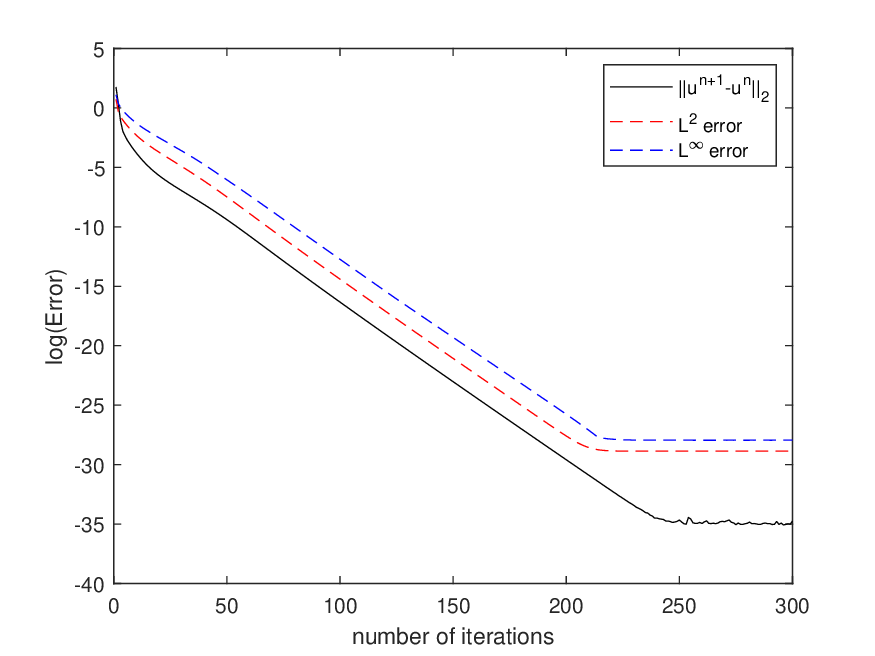}\\
		\end{tabular}
		\caption{(Monge‐Amp\`{e}re equation.) Numerical results for problem \eqref{Polynomial} on the regular mesh with $h=1/80$: (a) Graph of the computed solution for $\beta=1$. (b) Error history for $\beta=1$. (c) Graph of the computed solution for $\beta=4$. (d) Error history for $\beta=4$. }
         \label{Polynomial_graph}
	\end{figure}

We apply our proposed algorithm to the problem on the regular mesh as shown in Figure \ref{mesh}(a). The computational results are presented in Table \ref{Polynomial_error} and Figure \ref{Polynomial_graph}. 

Table \ref{Polynomial_error} shows the approximation errors, where Columns  2-3 present the $L^2$ and $L^{\infty}$ errors for $\beta=1$, and Columns 4-5 present the results for $\beta=4$. In this experiment, our scheme attains machine-precision accuracy for both $\beta=1$ and $\beta=4$. This exceptional performance can be attributed to the nature of the exact solution $u$, which is a quadratic function with constant second-order derivatives. As a consequence, our numerical approximation scheme for the Hessian in Section \ref{sec.FEM} captures these second-order derivatives with perfect accuracy in interior nodes, and the computed solution satisfies the Monge-Amp\`ere equation exactly (up to machine precision). 

Figure \ref{Polynomial_graph} illustrates the approximation results and error histories for various values of $\beta$, where the mesh parameter $h=1/80$. The numerical solution successfully captures the convexity on the regular mesh. As the parameter $\beta$ in problem \eqref{Polynomial} increases, the solution exhibits stronger anisotropic characteristics. This enhanced anisotropy leads to a substantial increase in the required number of iterations for convergence.

\subsubsection{A Smooth Example}
We consider the Monge-Amp\`ere equation defined as
	\begin{equation}\label{expfunction}
		\begin{cases}
			\det \textbf{D}^2u=(1+|x|^2)e^{|x|^2}\ \mbox{in} \ \Omega,\\
			g=e^{\frac{|x|^2}{2}}\ \mbox{on}\ \partial \Omega,
		\end{cases}
	\end{equation}
	where $\Omega=(0,1)^2$, a unit square. The solution $u$ is given as
	\begin{equation*}
		u=e^{\frac{|x|^2}{2}} \ \mbox{in} \  \Omega.
	\end{equation*}
	We first test DOS, FD and our proposed algorithm on the regular mesh as shown in Figure \ref{mesh}(a). In the experiment, we set $\tau=2h^2$ in the DOS algorithm so that it converges. 

\begin{figure}[t!]
		\centering
		\begin{tabular}{cc}
           (a) & (b)\\
			\includegraphics[width=0.4\textwidth]{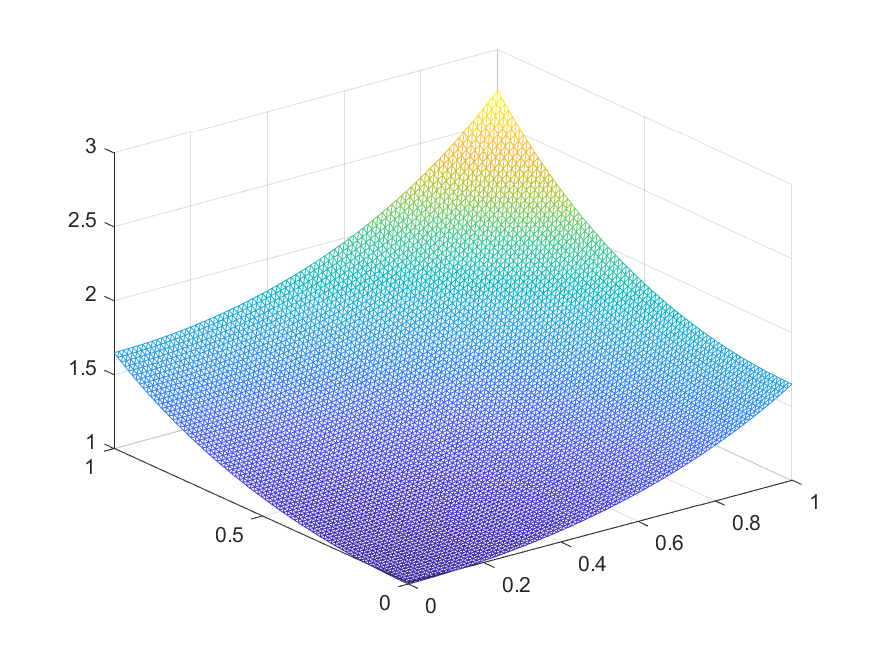} &
			\includegraphics[width=0.4\textwidth]{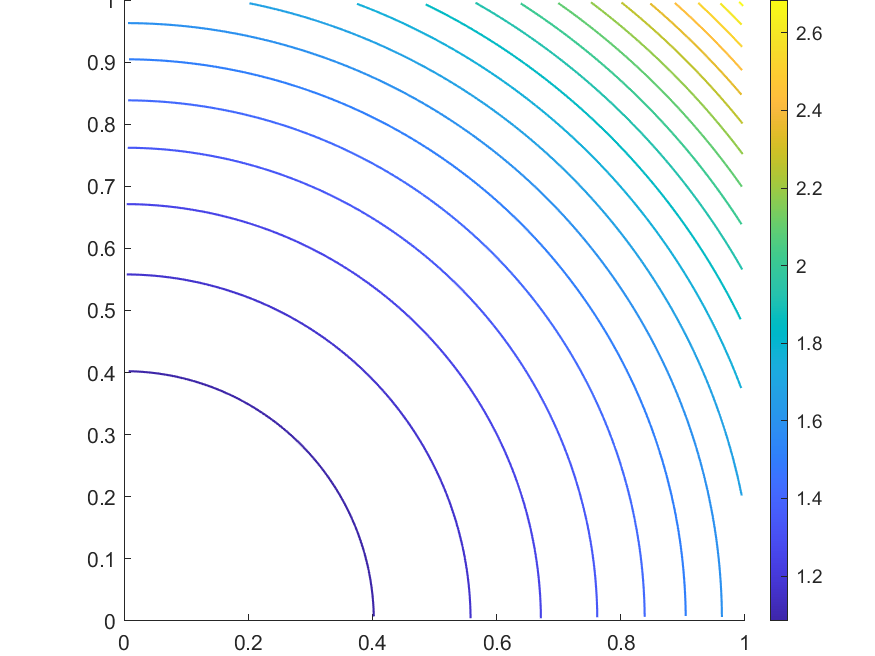}\\
		\end{tabular}
		\caption{(Monge‐Amp\`{e}re equation.) Numerical results for problem \eqref{expfunction} on the regular mesh. (a) Graph of the computed solutions for $h=1/80$. (b) Contours of the computed solutions for $h=1/80$.}
		\label{expfunction-graph}
	\end{figure}

	\begin{table}[t!]
		\centering
			(a)  
            \begin{tabular}{ccccccc}
				\hline
				$h$ &  DOS & Rate&FD&Rate & Proposed  & Rate\\
				\hline				
				1/20 &6.94$\times10^{-4}$ & & 9.61$\times10^{-5}$ &&6.69$\times10^{-5}$&\\
				1/40 & 1.92$\times10^{-4}$ &1.85&2.41$\times10^{-5}$& 2.00&
				1.68$\times10^{-5}$&1.99 \\    
				1/80&5.17$\times10^{-5}$&1.89 &6.03$\times10^{-6}$& 2.00&4.21$\times10^{-6}$&2.00 \\
				1/160&-&- &1.51$\times10^{-6}$& 2.00&1.05$\times10^{-6}$&2.00 \\
                \hline
			\end{tabular} \\ \vspace{0.1cm}
			(b)
            \begin{tabular}{ccccccc}
				\hline
				$h$ &  DOS &  Rate&FD&Rate & Proposed & Rate\\
				\hline					
				1/20 & 1.18$\times10^{-3}$ & & 1.71$\times10^{-4}$ &&1.20$\times10^{-4}$&\\
				1/40 & 3.86$\times10^{-4}$ &1.61& 4.29$\times10^{-5}$& 1.99&
				3.01$\times10^{-5}$&2.00 \\    
				1/80&1.27$\times10^{-4}$&1.60 &1.07$\times10^{-5}$& 2.00&7.55$\times10^{-6}$&2.00 \\
				1/160&-&- &2.69$\times10^{-6}$& 2.00&1.89$\times10^{-6}$&2.00 \\
				\hline
			\end{tabular} \\ \vspace{0.1cm}
		\caption{(Monge‐Amp\`{e}re equation.) Numerical results for problem \eqref{expfunction} on the regular mesh. (a) $L^2$ errors and convergence rates. (b) $L^{\infty}$ errors and convergence rates.}
		\label{expfunction-error}
	\end{table}
    
	\begin{table}[t!]
		\centering
			\begin{tabular}{ccccccccc}
				\hline
				$h$ &  1/20&1/40 & 1/80&1/160\\
				\hline				
				DOS & 9.8 & 121.2 &1638.8&-\\
				FD & 0.6 & 2.3&15.6&139.2\\ 
				Proposed&0.6&2.2& 8.4 &35.3\\
				\hline
			\end{tabular} \\ \vspace{0.1cm}
		\caption{(Monge‐Amp\`{e}re equation.) CPU time(s) for problem \eqref{expfunction} on the regular mesh.}
		\label{expfunction-time}
	\end{table}  
    
    Our numerical results with $h=1/80$ is visualized in Figure \ref{expfunction-graph}(a) with cross sections visualized in Figure \ref{expfunction-graph}(b). Our algorithm captures the convex solution based on the intrinsic formulation itself, without projecting the Hessian matrix to a semi-positive definite matrix, as used in DOS \cite{Liu2}.
    
    We next compare the new method with both the DOS and FD methods, and we present the $L^2$ and $L^{\infty}$ errors of all methods in Table \ref{expfunction-error}. Both the proposed method and the FD method give a convergence rate of 2, so that they are superior to the DOS method in terms of convergence rate. Among all three methods, our method yields the smallest errors for both $L^2$ and $L^{\infty}$ errors and all mesh parameter $h$'s. To compare the computational cost, we present in Table \ref{expfunction-time} the CPU times used by all methods to obtain results in Table \ref{expfunction-error}. On a coarse mesh, such as $h=1/20$ and $1/40$, the CPU time of our method is comparable to that of FD and is less than that of DOS. As the mesh is refined, our method is faster than FD. 

Compared to the FD method, an advantage of the proposed algorithm is that it can be easily applied to solve problems on complex domains with irregular boundaries. Consider the triangulation of the domain 
$$\Omega=\{(x_1,x_2)|(x_1-0.5)^2+(x_2-0.5)^2<1/4\}$$
as visualized in Figure \ref{mesh}(c); we test our new scheme against the DOS method on problem \eqref{expfunction} in this domain. We report both the $L^2$ and $L^{\infty}$ errors as well as corresponding convergence rates in Table \ref{expfunction-error-circle}.
Table \ref{expfunction-error-circle}(a) and Table \ref{expfunction-error-circle}(b) show that our proposed algorithm still performs better on the unstructured mesh with curved boundary than the DOS method, since our method provides smaller errors and higher convergence rates in terms of both $L^2$ and $L^{\infty}$ norms for all $h$'s than DOS.

 \begin{table}[t!]
		\centering
			(a)  
            \begin{tabular}{ccccccc}
				\hline
				$h$ &  DOS & Rate & Proposed  & Rate\\
				\hline				
				1/10 &2.29$\times10^{-3}$& &7.29$\times10^{-4}$&\\
				1/20 & 8.03$\times10^{-4}$ &1.51 &
				1.69$\times10^{-4}$& 2.11\\    
				1/40&2.78$\times10^{-4}$& 1.53&2.94$\times10^{-5}$& 2.52\\
				1/80&8.16$\times10^{-5}$&1.77&8.26$\times10^{-6}$&1.83 \\
                \hline
			\end{tabular} \\ \vspace{0.1cm}
			(b)
            \begin{tabular}{ccccccc}
				\hline
				$h$ &  DOS &  Rate & Proposed & Rate\\
				\hline					
				1/10 & 5.21$\times10^{-3}$ & &2.44$\times10^{-3}$&\\
				1/20 & 3.00$\times10^{-3}$ &0.80&
				7.48$\times10^{-4}$&1.71\\    
				1/40&1.36$\times10^{-3}$&1.14 &1.74$\times10^{-4}$&2.10\\
				1/80&4.97$\times10^{-4}$&1.45 &4.79$\times10^{-5}$& 1.86\\
				\hline
			\end{tabular} \\ \vspace{0.1cm}
		\caption{(Monge‐Amp\`{e}re equation.) Numerical results for problem \eqref{expfunction} on a half-unit disk. (a) $L^2$ errors and convergence rates. (b) $L^{\infty}$ errors and convergence rates.}
		\label{expfunction-error-circle}
	\end{table}   

	\subsubsection{An Obstacle Problem}
	We consider an obstacle problem for the Monge-Amp\`ere equation given as
	\begin{equation}\label{obstacle}
		\begin{cases}
			\det \Db^2u=f \ \mbox{in} \  \Omega,\\
			g=\dfrac{1}{2} (\max( |x - x_0| - 0.2,0))^2 \  \mbox{on} \  \partial \Omega,
		\end{cases}
	\end{equation}
where $\Omega = (0, 1)^2$, $f=\max( 1 - \frac{0.2}{|x - x_0|},0 )$, and $x_0 = (0.5,0.5)$. The exact solution $u$ is
	\begin{equation*}
		u=\dfrac{1}{2} (\max( |x - x_0| - 0.2,0))^2 \  \mbox{in} \   \Omega,
	\end{equation*}
	which is convex and is in $C^1(\Omega)$. 
	It is noted that the value of $u$ in equation \eqref{obstacle}
	is zero within the open disk of radius $0.2$ centered at 
	$(0.5,0.5)$, rendering problem \eqref{obstacle} degenerate elliptic. We consider problem \eqref{obstacle} as an obstacle problem for the Monge-Amp\`ere operator. 

    We present our numerical solution on the regular mesh with $h=1/80$ in Figure \ref{obstacle-shoot}(a), whose level curves are visualized in Figure \ref{obstacle-shoot}(b), and the numerical solution is smooth and convex. In Figure \ref{obstacle-shoot}(b), we observe a region of constant values, corresponding to the region over which the right-hand side $f$ of problem (\ref{obstacle}) is zero.

Both FD and DOS have been applied to solve problem \eqref{obstacle} on the regular mesh. In \cite{Liu2}, the authors reported that DOS is divergent if it is directly applied to \eqref{obstacle}. To deal with this dilemma, they regularize the function $f$ by $f_\eta$,  
	\begin{equation*}
		f_\eta=\max\left( 1 - \dfrac{0.2}{|x - x_0|},\eta\right) \ \mbox{in} \   \Omega,
	\end{equation*}
	where $\eta=h$ or $h^2$.
	When $\eta=h^2$, the accuracy of the DOS algorithm is quite good, but it needs a large number of iterations, leading to a huge computational cost.
    
	On the other hand, our proposed algorithm can be directly applied to problem \eqref{obstacle} and the computational cost is much lower that of DOS. It is worth noting that  $\tau=1$ can accelerate the convergence speed in the DOS algorithm in this particular example. 
    \begin{figure}[t!]
		\centering
		\begin{tabular}{cc}
                (a) & (b)\\
			\includegraphics[width=0.4\textwidth]{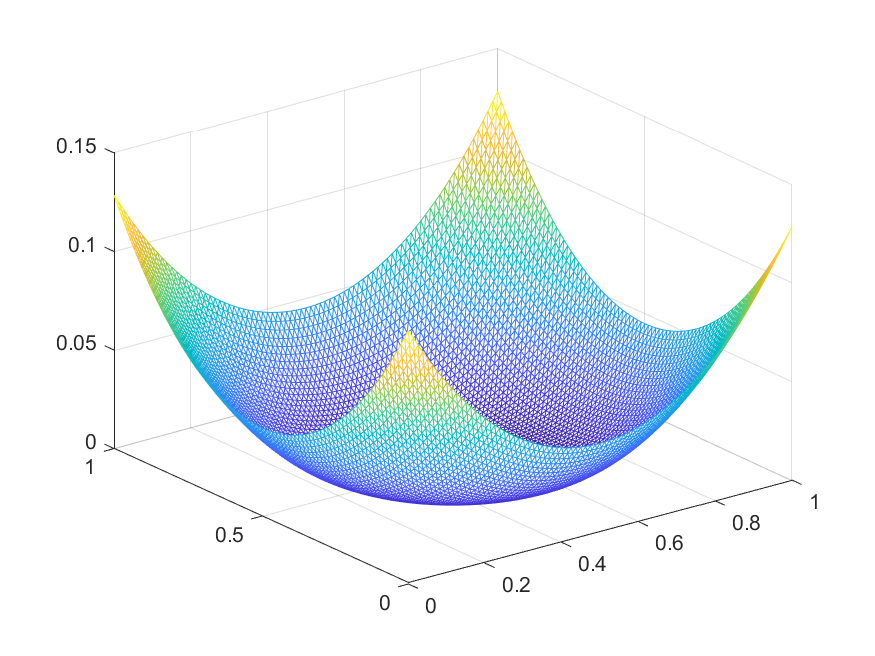} &
			\includegraphics[width=0.4\textwidth]{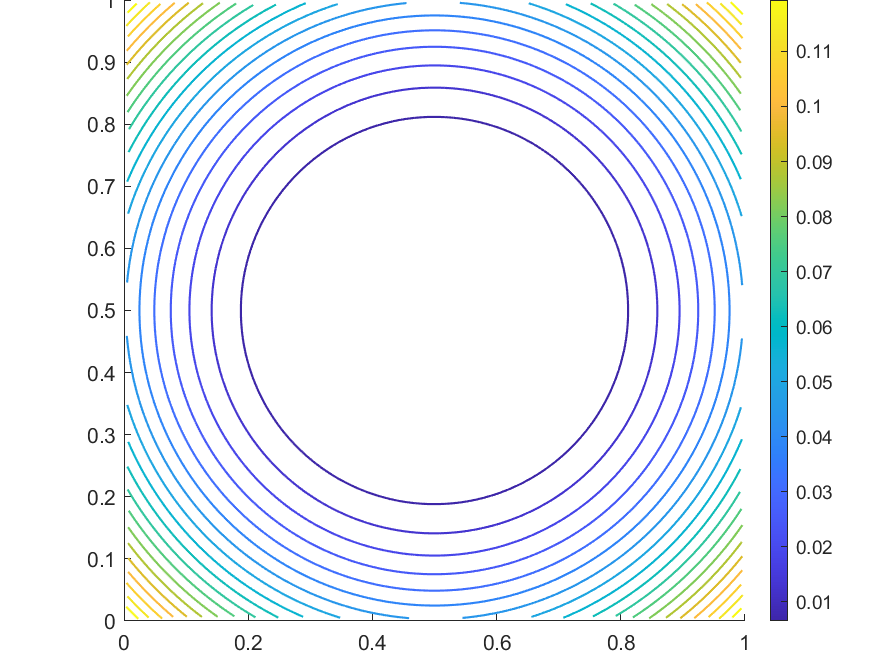}
		\end{tabular}
		\caption{(Monge‐Amp\`{e}re equation.) Numerical results for problem \eqref{obstacle} on the regular mesh. (a) Graph of the computed solutions for $h=1/80$. (b) Contours of the computed solutions for $h=1/80$.}
		\label{obstacle-shoot}
	\end{figure}
	
	\begin{table}[t!]
		\centering
        		(a) 
			  \begin{tabular}{ccccccccc}
				\hline
				$h$ &  DOS &  Rate&FD&Rate & Proposed  & Rate\\
				\hline					
				1/20 &4.14$\times10^{-4}$ & & 2.94$\times10^{-4}$ & &2.53$\times10^{-4}$&\\
				1/40 & 9.88$\times10^{-5}$ &2.07&1.08$\times10^{-4}$&1.44 &
				9.37$\times10^{-5}$& 1.43\\    
				1/80&3.92$\times10^{-5}$&1.33 &3.46$\times10^{-5}$& 1.64&2.98$\times10^{-5}$& 1.65\\   1/160&2.39$\times10^{-5}$&0.71&1.35$\times10^{-5}$& 1.36&1.17$\times10^{-5}$& 1.35\\
				1/320&-&-&5.14$\times10^{-6}$& 1.39&4.44$\times10^{-6}$& 1.40\\
                \hline
			\end{tabular} \\ \vspace{0.1cm}
            		(b) 
			 \begin{tabular}{ccccccccc}
				\hline
				$h$ &  DOS &  Rate&FD&Rate & Proposed  & Rate\\
				\hline					
				1/20 &1.09$\times10^{-3}$ & &6.70$\times10^{-4}$ &&5.90$\times10^{-4}$&\\
				1/40 & 3.13$\times10^{-4}$ &1.80& 2.71$\times10^{-4}$&1.30 &
				2.65$\times10^{-4}$&1.15 \\    
				1/80&1.85$\times10^{-4}$&0.76 &1.01$\times10^{-4}$& 1.42&1.06$\times10^{-4}$& 1.32\\
				1/160&1.17$\times10^{-4}$&0.66 &4.41$\times10^{-5}$& 1.20&4.79$\times10^{-5}$& 1.15\\
				1/320&-&-&1.85$\times10^{-5}$& 1.25&2.10$\times10^{-5}$& 1.19\\
				\hline
			\end{tabular} \\ \vspace{0.1cm}
		\caption{(Monge‐Amp\`{e}re equation.) Numerical results for problem \eqref{obstacle} on the regular mesh. (a) $L^2$ errors and convergence rates. (b) $L^{\infty}$ errors and convergence rates.}
		\label{obstacle-error}
	\end{table}
\begin{table}[t!]
		\begin{center}
			\begin{tabular}{ccccccccc}
				\hline
				$h$ &  1/20&1/40& 1/80&1/160&1/320\\
				\hline				
				DOS & 1.4 & 7.5 &69.5&1846.4 &-\\
				FD &  0.4 & 1.5&9.5& 73.2& 562.5\\ 
				Proposed& 1.0&3.3& 14.2&72.4 &216.6 \\
				\hline
			\end{tabular} \\ \vspace{0.1cm}
		\end{center}
		\caption{(Monge‐Amp\`{e}re equation.) CPU time(s) for problem \eqref{obstacle} on the regular mesh.}
		\label{obstacle-time}
	\end{table}

    The comparison of the proposed algorithm with DOS and FD is shown in Table \ref{obstacle-error}. In this example, the stopping criterion of our proposed algorithm is $||u^{n+1}-u^n||<10^{-10}$ for $h=1/320$.

    We observe that the convergence rates of our proposed algorithm exceed 1 in both $L^2$ and $L^\infty$ norms.
	Our proposed algorithm demonstrates performance comparable to that of FD but outperforms that of DOS. In terms of efficiency, we compare the CPU times of all three methods in Table \ref{obstacle-time}. On very coarse meshes, such as $h=1/20$, $1/40$, and $1/80$, FD is the most efficient one; however, on a finer mesh such as $h=1/320$, the proposed algorithm is much faster than FD. 
            
		\subsubsection{An Example with Singular Solution}
		In this example, we consider the following problem
		\begin{equation}\label{singularity-b}
			\begin{cases}
				\det \textbf{D}^2u=\dfrac{4}{(1-4r^2)^2}\;\  \mbox{in} \;\ \Omega,\\
				g=0\;\ \mbox{on}\;\ \partial \Omega,
			\end{cases}
		\end{equation}
		where $r=\sqrt{(x_1-0.5)^2+(x_2-0.5)^2}$, and $\Omega=\{(x_1,x_2)|(x_1-0.5)^2+(x_2-0.5)^2<1/4\}$ is a half-unit disk which is triangulated in Figure \ref{mesh}(c). Then problem \eqref{singularity-b} has a strictly convex solution $u$ which is given by
\begin{equation*}
u=-\dfrac{1}{2}\sqrt{1-4r^2} \;\ \mbox{in} \;\ \Omega.\\
\end{equation*}
		
		The solution $u$ satisfies that $u \in C^0(\overline{\Omega}) \cap W^{1,s}(\Omega),\; \forall s \in  [1,2)$. However, we note that the function $u$ is not as smooth as those solutions in previous examples, because the value of $|\nabla u|$ is infinite on the boundary of $\Omega$. Consequently, problem \eqref{singularity-b} is a good example to test robustness of our algorithm. 
        
		\begin{table}[!ht]
			\begin{center}
				\begin{tabular}{cccccc}
					\hline
					$h$& $L^2$ error & Rate  &  $L^{\infty}$ error & Rate\\
					\hline							
					1/20&6.59$\times10^{-2}$ &   & 8.29$\times10^{-2}$ & \\
					1/40&4.10$\times10^{-2}$ & 0.68  &  5.92$\times10^{-2}$&0.49\\
					1/80&2.18$\times10^{-2}$ & 0.91  & 4.28$\times10^{-2}$&0.47\\
                        1/160&8.23$\times10^{-3}$ & 1.41  &  3.10$\times10^{-2}$ & 0.47\\
					
					\hline
				\end{tabular} \\ \vspace{0.1cm}
			\end{center}
			\caption{(Monge‐Amp\`{e}re equation.) Numerical results by our proposed algorithm for problem \eqref{singularity-b} on a half-unit disk: $L^2$ errors, $L^{\infty}$ errors, and corresponding convergence rates.}
			\label{test-singularity-b}
		\end{table}
		
		\begin{figure}[!ht]
			\centering
			\begin{tabular}{ccccc}	
            \multicolumn{3}{c}{(a)} \\ 
				\includegraphics[width=0.4\textwidth]{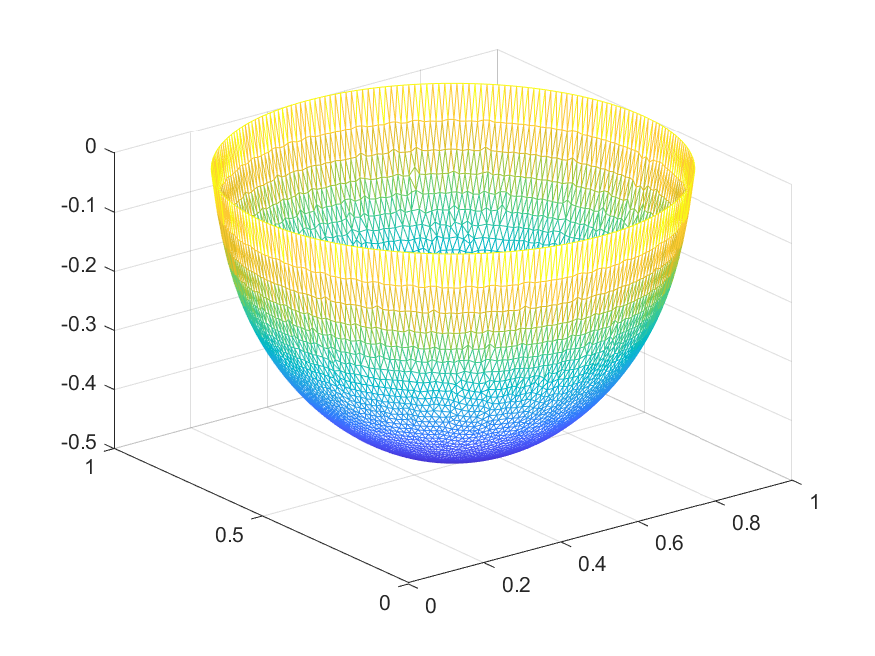} &  &
				\includegraphics[width=0.4\textwidth] 
				{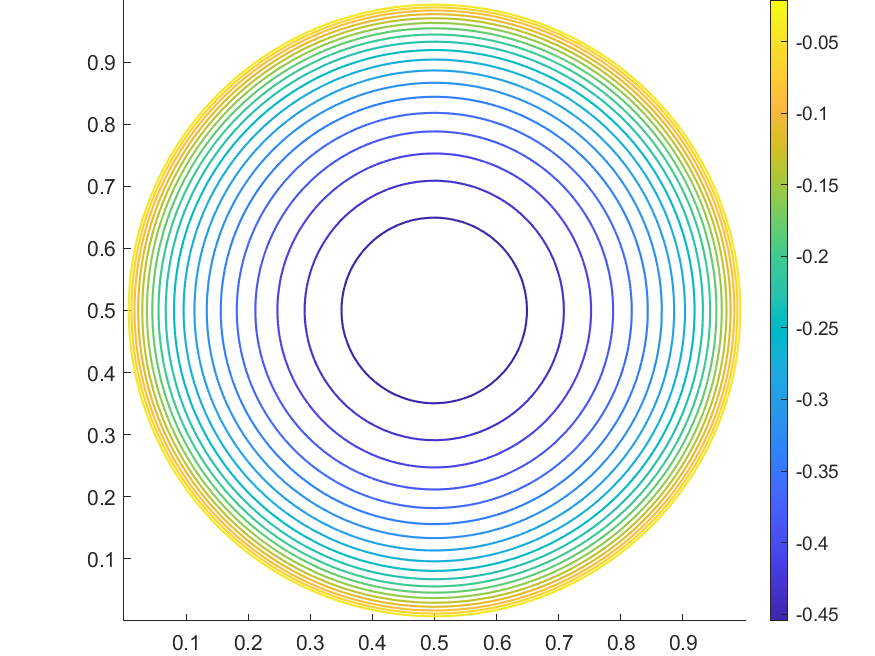}\\        \multicolumn{3}{c}{(b)} \\ 
				\includegraphics[width=0.4\textwidth]{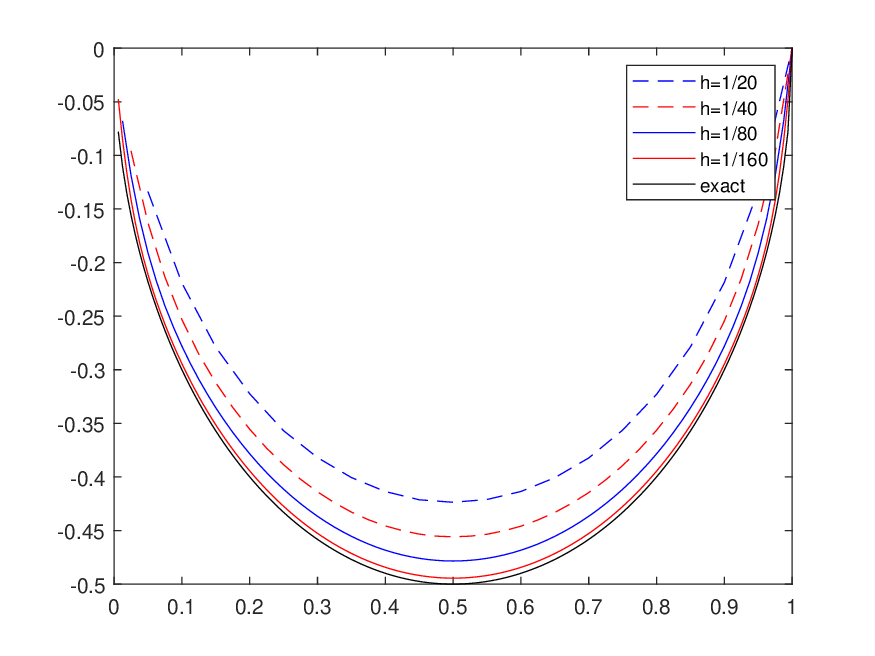} &  &
				\includegraphics[width=0.4\textwidth] 
				{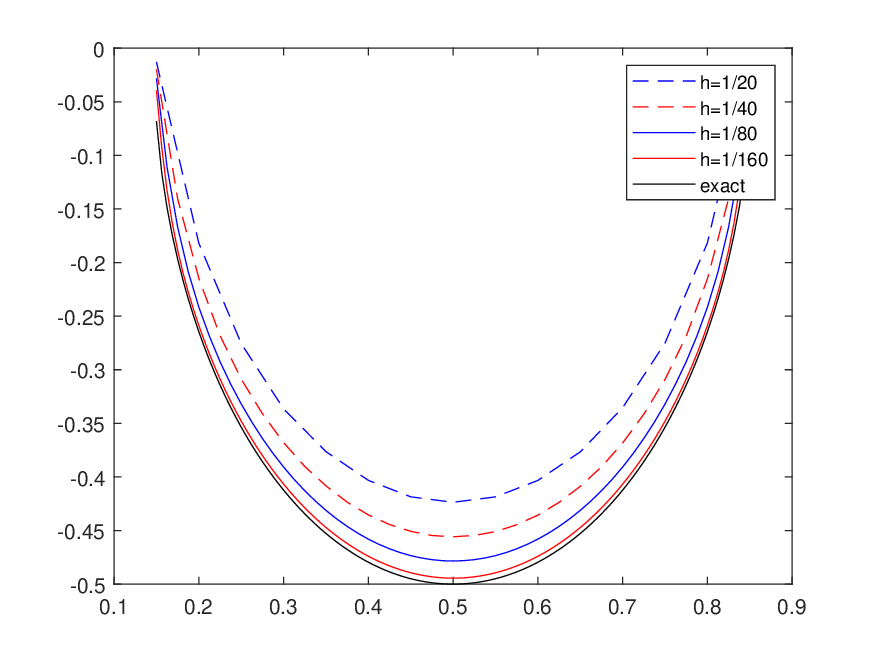}\\         
			\end{tabular}	
			\caption{(Monge‐Amp\`{e}re equation.) Numerical results for problem \eqref{singularity-b} on the unstructured mesh of the half-unit disk. (a) Graphs and contours of the computed solution. (b) Cross sections of the computed results along the line $x_2 = 1/2$ (left)
				and the line $x_1 = x_2$ (right) for $h = 1/20$, $1/40$, $1/80$, and $1/160$, respectively.}
			\label{singularity-b-graph}
		\end{figure}
		
		Numerical results by the proposed method are reported in Table \ref{test-singularity-b} and Figure \ref{singularity-b-graph}. 
        From Table \ref{test-singularity-b}, we observe the convergence order in terms of the $L^2$ and $L^{\infty}$ norms  are approximately $1$ and $0.5$, respectively. Figure \ref{singularity-b-graph} shows that our new method is able to capture very well the convex solution with singularity on the  boundary.
		
		\subsubsection{An Example without Classical Solution}
		In this experiment we consider a Monge-Amp\`ere equation without an exact solution:
		\begin{equation}\label{no-solution}
			\begin{cases}
				\det \textbf{D}^2u=1\ \mbox{in} \ \Omega,\\
				u=0\ \mbox{on}\ \partial \Omega,
			\end{cases}
		\end{equation}
		where $\Omega$ is the unit square $(0, 1)^2$.

		Problem \eqref{no-solution} does not have a classical solution, but it admits a viscosity solution. How to compute the viscosity solution for this problem has been studied in \cite{Liu2, Froese2}, and we compare our results with theirs to validate that our new algorithm is able to compute the generalized solution as well.
		
		Since no exact solution is available for comparison, we focus on checking the minimum value of the computed solution. Numerical results and corresponding graphs are presented in Table \ref{test-no-solution} and Figure \ref{test-graph-no-solution}, respectively.
Table \ref{test-no-solution} indicates that the minimum value of the solution obtained by our algorithm is $-0.1826$  for $h = 1/80$, which is close to $-0.182625$ and $-0.1831$ as reported in \cite{Froese2} and \cite{Liu2}, respectively. Our algorithm is slower for problem \eqref{no-solution} compared to the previous examples in terms of the number of iterations, because $f$ in this example is very small, and it is observed from many numerical experiments that a relatively large $f$ leads to a relatively fast convergence behavior.

	\begin{table}[t!]
			\centering
				\begin{tabular}{ccccc}
					\hline
					$h$&1/10&1/20&1/40&1/80\\
					\hline		
					Mini value& -0.1615 & -0.1714 & -0.1786&-0.1826\\					
					Iterations&18 & 33& 65&126 \\
					\hline
				\end{tabular} \\ \vspace{0.1cm}
			\caption{(Monge‐Amp\`{e}re equation.) Numerical results for problem \eqref{no-solution} on the unstructured mesh as shown in Figure \ref{mesh}(b): the number of iterations and the minimum value.}
			\label{test-no-solution}
	\end{table}     
		\begin{figure}[t!]
			\centering
			\begin{tabular}{cc} 
            \multicolumn{2}{c}{(a)} \\
				\includegraphics[width=0.4\textwidth]{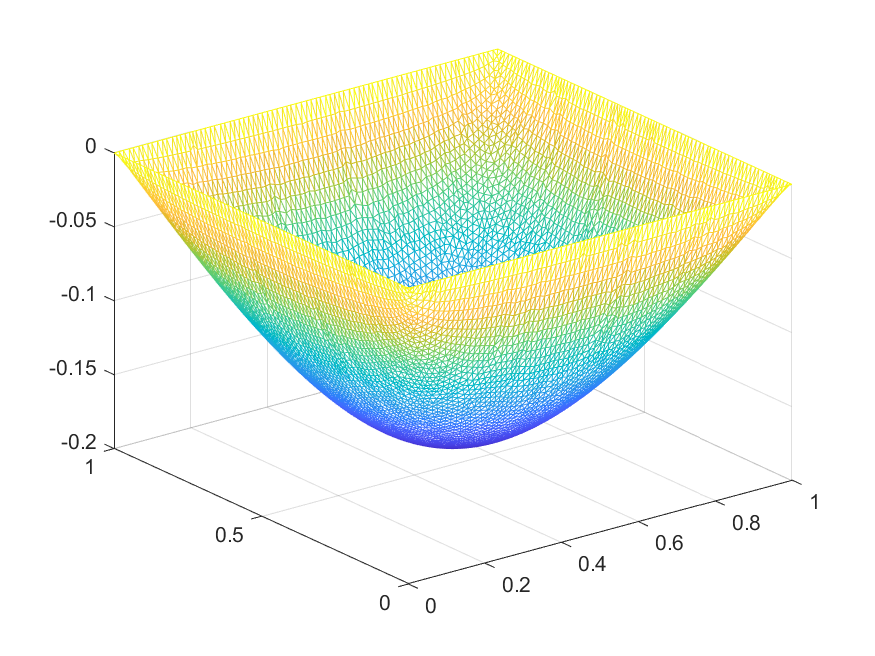} &  
				\includegraphics[width=0.4\textwidth]{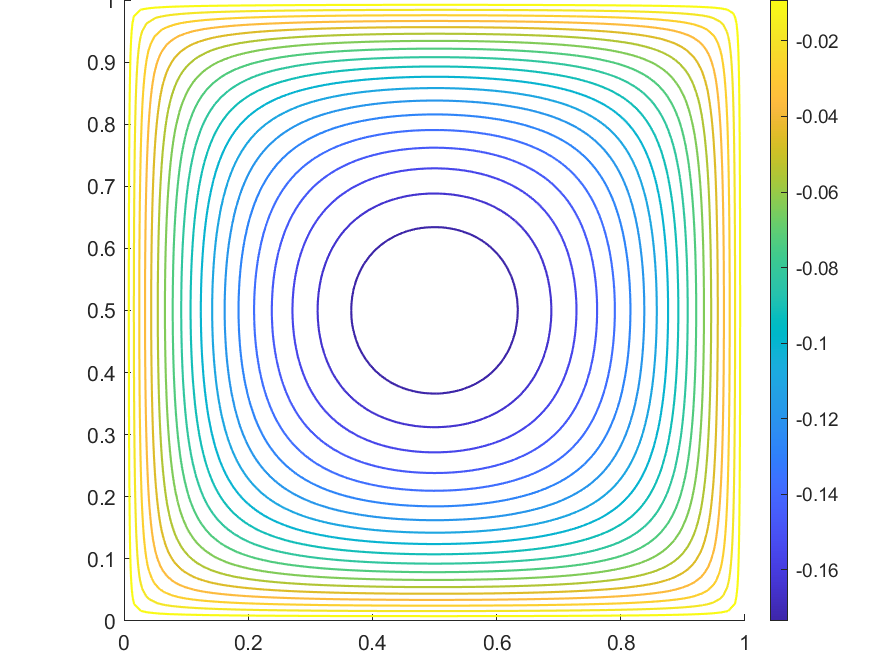} \\
				\multicolumn{2}{c}{(b)} \\
				\includegraphics[width=0.4\textwidth]{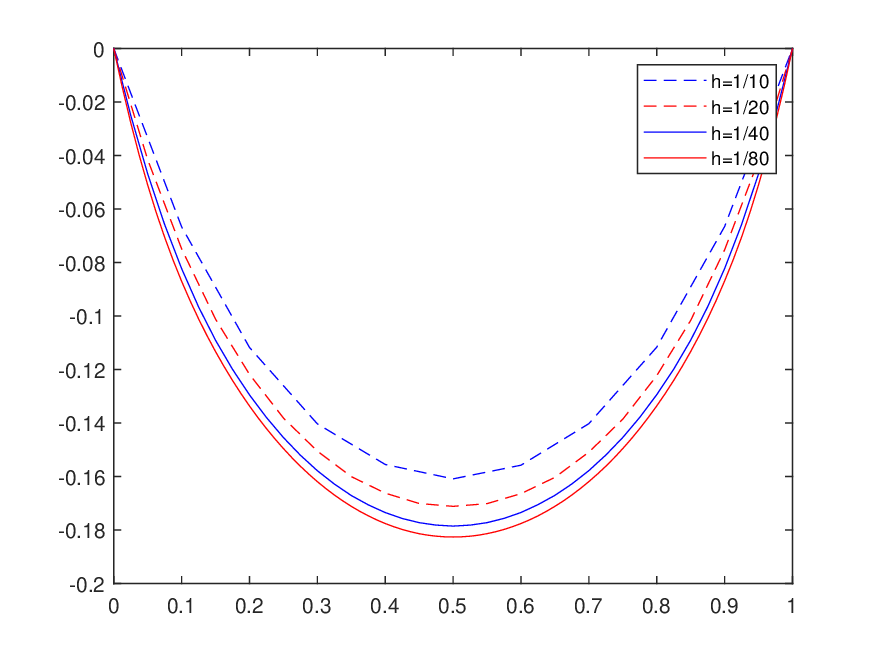} &  
				\includegraphics[width=0.4\textwidth]{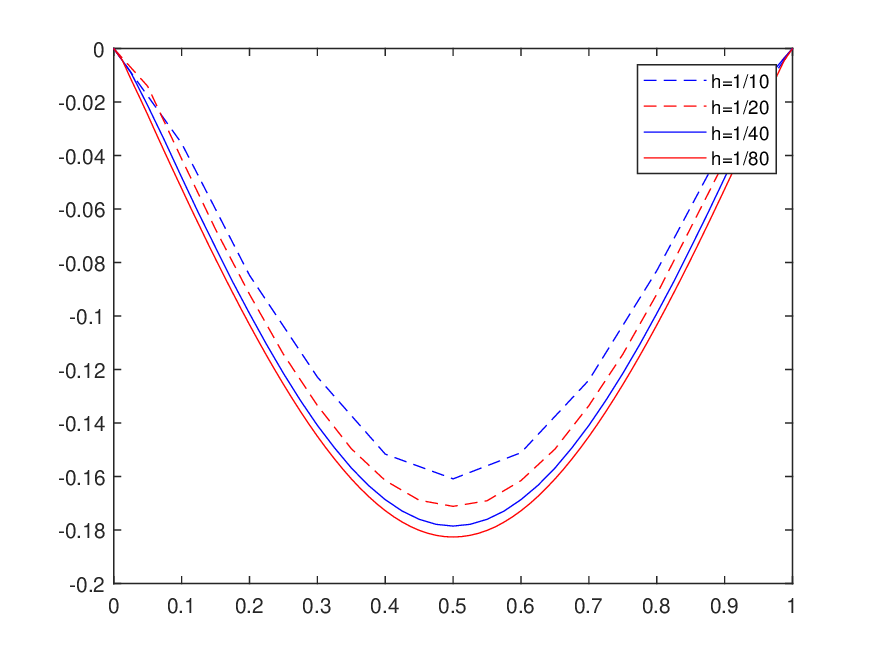} \\
			\end{tabular}    
			\caption{(Monge‐Amp\`{e}re equation.) Numerical results for problem \eqref{no-solution} on the unstructured mesh as shown in Figure \ref{mesh}(b). (a) Graph and contours of the computed solution. (b) Cross sections of computed results along the line $x_1 = 1/2$ (left)
				and the line $x_1 = x_2$ (right) for $h = 1/20$, $1/40$, and $1/80$, respectively.}
			\label{test-graph-no-solution}
		\end{figure}
		
		The test problem \eqref{no-solution} demonstrates that the non-strict convexity of $[0,1]^2$ results in the non-existence of a smooth solution. To study the effect of boundary corners, we transform the unit square into the following strictly convex domain, defined by
		\begin{equation}\label{eye}
			\Omega = \{(x_1, x_2) \mid -x_1(1-x_1) < x_2 < x_1(1-x_1), 0 < x_1 < 1\}.
		\end{equation}
		The triangulation of this domain is visualized in Figure \ref{eye-graph}. It is shown in \cite{Liu2} that problem (\ref{no-solution}) on domain (\ref{eye}) does not have a classical solution.

        Figure \ref{test-eye-graph} presents us with very detailed information about the solution and its properties. Except for $\{0, 0\}$ and $\{1, 0\}$, the value of $|\nabla u|$ approach infinity on the entire boundary. The minimum value is $-0.0538$ for $h=1/80$, which is consistent with the result in \cite{Liu2}. 
        
		\begin{figure}[t!]
			\centering
			\begin{tabular}{cc}    
				\includegraphics[clip, trim = {30 10 30 10}, width=0.4\textwidth]{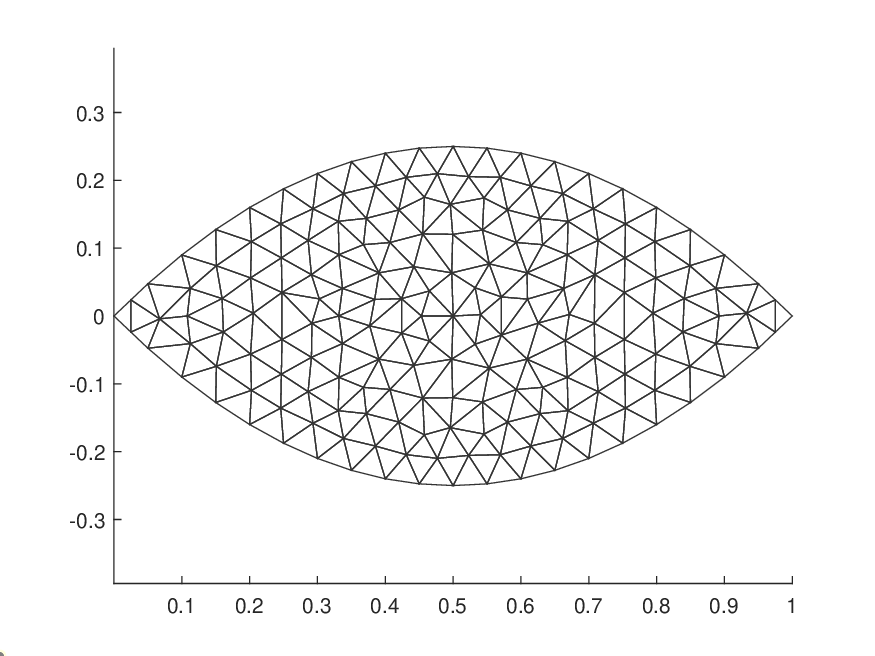} 
			\end{tabular}    
			\caption{A triangulation of the eye-shaped domain}
			\label{eye-graph}
		\end{figure}   
        
		\begin{table}[t!]
			\centering
				\begin{tabular}{ccccc}
					\hline
					$h$&1/10&1/20&1/40&1/80\\
					\hline		
					Mini value& -0.0515 & -0.0528 & -0.0533&-0.0538\\					
					Iterations&15 & 15& 18&23 \\
					\hline
				\end{tabular} \\ \vspace{0.1cm}
			\caption{(Monge‐Amp\`{e}re equation.) Numerical results for problem  \eqref{no-solution} on the eye-shaped  domain: the number of iterations and the minimum value.}
			\label{test-eye}
		\end{table}

		\begin{figure}[!ht]
			\centering
			\begin{tabular}{cc}  
            \multicolumn{2}{c}{(a)} \\
				\includegraphics[width=0.4\textwidth]{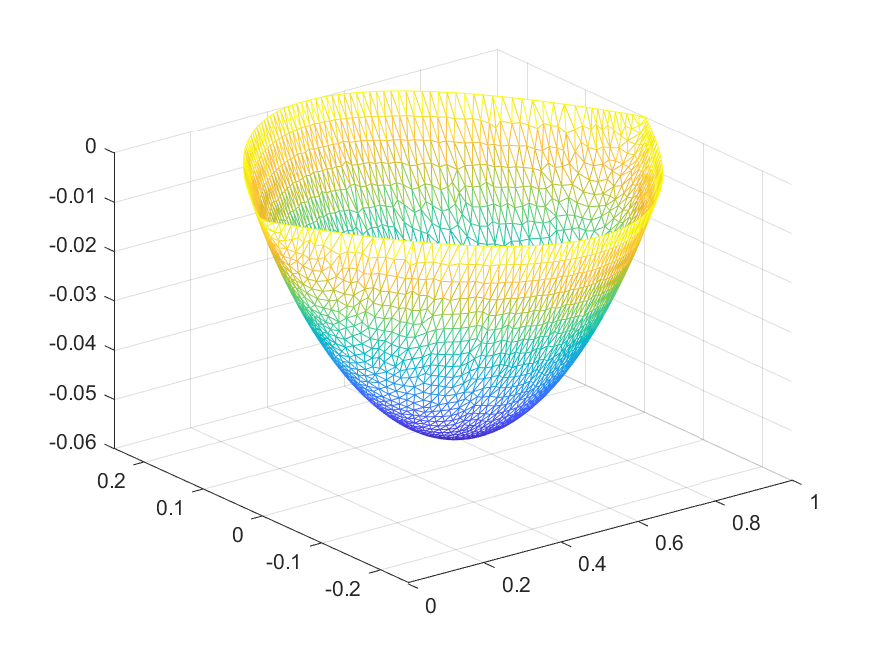} &  
				\includegraphics[width=0.4\textwidth]{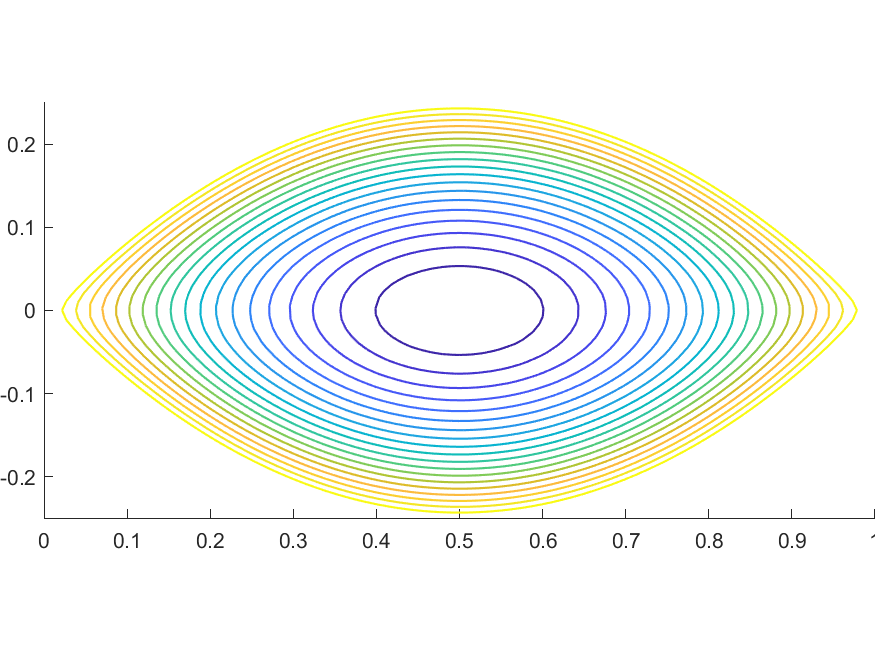} \\
				\multicolumn{2}{c}{(b)} \\
				\includegraphics[width=0.4\textwidth]{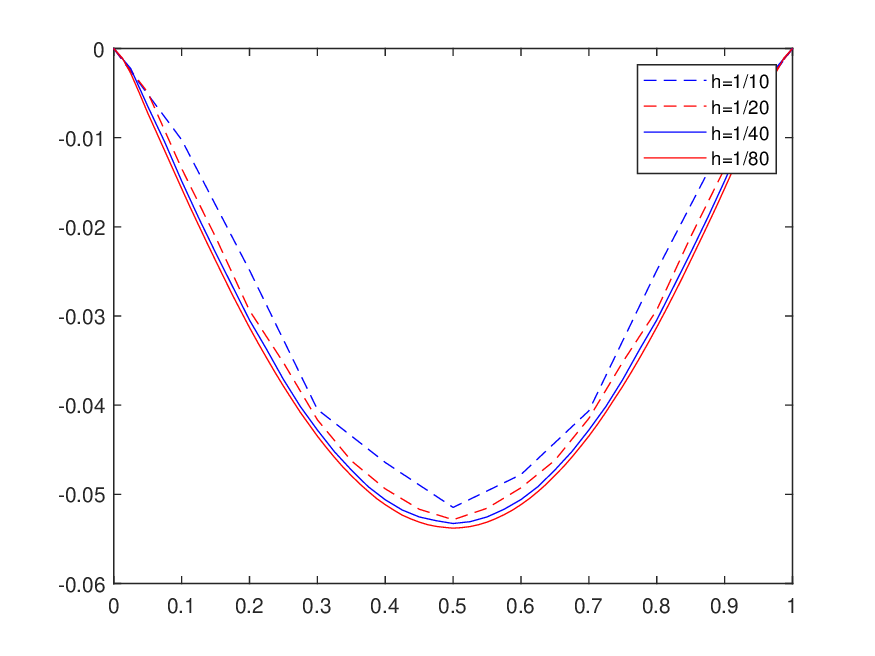} &  
				\includegraphics[width=0.4\textwidth]{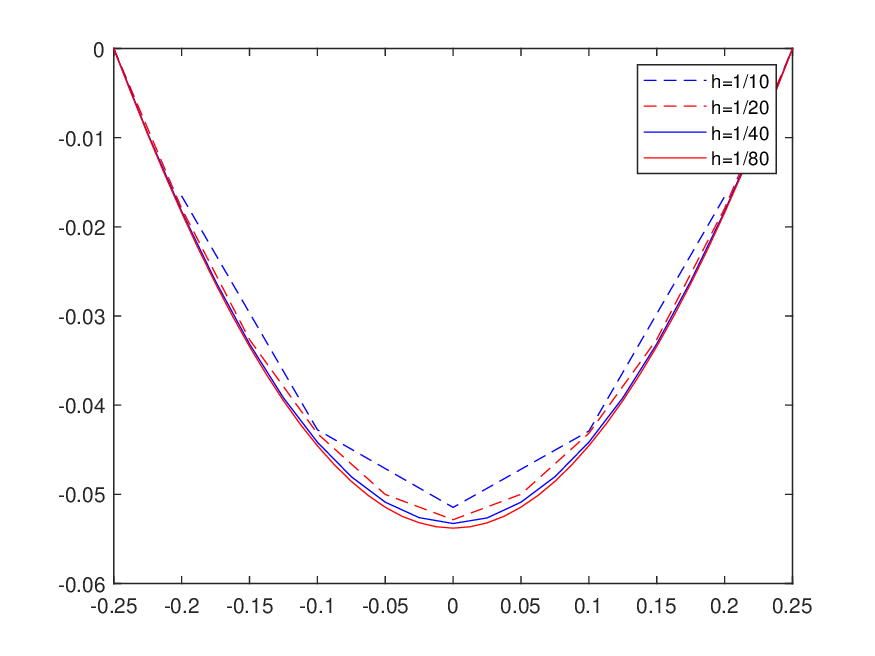} \\
			\end{tabular}    
			\caption{(Monge‐Amp\`{e}re equation.) Numerical results for problem \eqref{no-solution} on the eye-shaped  domain. (a) Graph and contours of the computed solution;  (b) Cross sections of the computed results along the line $x_1 = 1/2$ (left) and the line $x_1 = x_2$ (right) for $h = 1/20$, $1/40$, and $1/80$, respectively.}
			\label{test-eye-graph}
		\end{figure}

		\subsection{Pucci's Equation}
 As for the Pucci's equation (\ref{pucci}), we test two problems to  compare our proposed algorithm with the least-squares (LS) method designed by Caffarelli and Glowinski \cite{Glowinski}.
        
		\subsubsection{Smooth Solution}
		In this experiment, we consider the boundary condition defined by
		\begin{equation}\label{P-smooth}
			\begin{aligned}
				g(x) = -\rho^{1-\alpha} \  \mbox{on} \  \partial \Omega,
			\end{aligned}
		\end{equation}
		where $\rho =\sqrt{(x_1+1)^2+(x_2+1)^2}$ and $\Omega=(0, 1)^2$. 
        
        As shown in \cite{Glowinski}, the exact solution is given by 
		\begin{equation*}
			\begin{aligned}
				u = -\rho^{1-\alpha} \ \mbox{in} \ \Omega.
			\end{aligned}
		\end{equation*}
Since $(-1,-1)\notin\overline{\Omega}$, $u \in C^{\infty}(\overline{\Omega})$ is a smooth solution of the Pucci's equation for $\alpha>1$.

By testing this example, we have two primary goals: (i) Investigate the convergence rate of our algorithm for the Pucci's equation; (ii) Examine the influence of the parameter $\alpha$ on the performance of our proposed algorithm. 
        		
In Table \ref{P-smooth-table1}, we present the numerical results obtained using our proposed algorithm on the unstructured mesh as shown in Figure \ref{mesh}(b), with values of 
        \(\alpha = 2, 3, 4\). Both the \(L^2\) error and the 
        \(L^{\infty}\) error demonstrate that our scheme achieves nearly optimal convergence rates.

        By transforming the Pucci's equation into the following Monge-Amp\`ere type equation
		\begin{equation*}
			\begin{aligned}
				\alpha |\Delta u|^2 + (\alpha - 1)^2 \det \mathbf{D}^2 u = 0 \ \mbox{in}  \ \Omega,
			\end{aligned}
		\end{equation*}
		we observe that as \(\alpha\) increases, specifically when \(\alpha > \frac{3 + \sqrt{5}}{2}\), the Monge-Amp\`ere operator \(\det \mathbf{D}^2 u\) becomes relatively more significant than the squared Laplace-operator part $|\Delta u|^2$, making the problem more complicated. This feature is illustrated in Table \ref{P-smooth-table1}; as \(\alpha\) increases from \(2\) to \(4\), the number of iterations increases from \(11\) to \(21\) and the accuracy decreases. 
        
		Additionally, we apply our proposed algorithm on the regular mesh as shown in Figure \ref{mesh}(a) to the problem and further compare our results with those obtained by the LS method in \cite{Glowinski}. As shown in Table \ref{comparision-p-smooth}, our new scheme has an optimal rate, analogous to the least-squares method, but it achieves higher accuracy.
		\begin{table}[t!]
		\centering
        (a)
			\begin{tabular}{cccccccc}
					\hline
					$\alpha$&$h$&Iterations& $L^2$ error & Rate  &  $L^{\infty}$ error & Rate& CPU time(s)\\
					\hline
					2&1/10&9 &6.71$\times10^{-5}$ &   &  1.77$\times10^{-4}$ & &0.3\\
					
					2&1/20&10 &1.55$\times10^{-5}$ & 2.11  & 5.88$\times10^{-5}$ & 1.59 &0.8\\
					2&1/40&11 &4.41$\times10^{-6}$ & 1.81   &  1.67$\times10^{-5}$&1.81&3.0\\
					2&1/80&11 &1.46$\times10^{-6}$ &  1.59 & 5.00$\times10^{-6}$&1.74&15.5\\
					\hline
				\end{tabular} \\ \vspace{0.1cm}
                     (b)
				 \begin{tabular}{cccccccc}
					\hline
					$\alpha$&$h$&Iterations& $L^2$ error & Rate  &  $L^{\infty}$ error & Rate& CPU time(s)\\
					\hline		
					3&1/10&11 &2.13$\times10^{-4}$ &   &  4.59$\times10^{-4}$ & &0.3\\
					
					3&1/20&13 &5.88$\times10^{-5}$ & 1.86  & 1.54$\times10^{-4}$ &1.58& 0.8\\
					3&1/40&15 &1.70$\times10^{-5}$ & 1.79   &  4.65$\times10^{-5}$&1.73&3.4\\
					3&1/80&16 &5.17$\times10^{-6}$ &  1.72 & 1.38$\times10^{-5}$&1.75&17.5\\
					\hline
				\end{tabular} \\ \vspace{0.1cm}
                     (c)
				 \begin{tabular}{cccccccc}
					\hline
					$\alpha$&$h$&Iterations& $L^2$ error & Rate  &  $L^{\infty}$ error & Rate& CPU time(s)\\
					\hline		
					4&1/10&12 &3.72$\times10^{-4}$ &   &  7.84$\times10^{-4}$ & &0.3\\
					
					4&1/20&15 &1.11$\times10^{-4}$ & 1.74  & 2.55$\times10^{-4}$ & 1.62&0.9\\
					4&1/40&19 &3.21$\times10^{-5}$ &  1.79  &  7.74$\times10^{-5}$&1.72&3.9\\
					4&1/80&21 &9.26$\times10^{-6}$ &  1.79 & 2.34$\times10^{-5}$&1.73&20.4\\
					\hline
				\end{tabular} \\ \vspace{0.1cm}
			\caption{(The Pucci's equation.) Numerical results for problem \eqref{P-smooth} on the unstructured mesh as shown in Figure \ref{mesh}(b). Number of iterations, numerical errors, convergence rates, and CPU time(s) of (a) $\alpha=2$, (b) $\alpha=3$, and (c) $\alpha=4$, respectively.}
			\label{P-smooth-table1}
		\end{table}
		\begin{table}[t!]
			\centering
				\begin{tabular}{cccccccc}
					\hline
					$\alpha$&$h$& LS & Rate  & Proposed & Rate\\
					\hline
					2&1/64&3.37$\times10^{-6}$ &   & 2.29$\times10^{-6}$& \\			
					2&1/128& 8.44$\times10^{-7}$&2.00 & 5.73$\times10^{-7}$  &  2.00 \\
					3&1/32&1.03$\times10^{-4}$ & &2.54$\times10^{-5}$   &  \\			
					3&1/64& 2.57$\times10^{-5}$& 2.00& 6.35$\times10^{-6}$  &  2.00 \\
					\hline
				\end{tabular} \\ \vspace{0.1cm}
			\caption{(The Pucci's equation.) Numerical results for problem \eqref{P-smooth} on the regular mesh as shown in Figure \ref{mesh}(a): $L^2$ errors and convergence rates of both LS \cite{Glowinski} and our proposed algorithm.}
			\label{comparision-p-smooth}
		\end{table}
		
		\subsubsection{Regularization of Boundary Data}
		We further consider the Pucci's equation with following boundary condition 
		\begin{equation}\label{P-nonsmooth}
			g(x)=
			\begin{cases}
				0, \  x \in \bigcup_{i=1}^4 \varGamma_i,\\
				1,  \  \mbox{elsewhere}, 
			\end{cases}
		\end{equation}
		where
		\begin{equation*}
			\begin{aligned}
				&\varGamma_1 = \left\lbrace  x | x = \left\lbrace x_1, x_2\right\rbrace, 1/4 < x_1 < 3/4, x_2 = 0\right\rbrace ,\\
				&\varGamma_2 = \left\lbrace x | x = \left\lbrace x_1, x_2\right\rbrace,  x_1 = 1, 1/4 < x_2 < 3/4\right\rbrace, \\
				&\varGamma_3 = \left\lbrace x | x = \left\lbrace x_1, x_2\right\rbrace , 1/4 < x_1 < 3/4, x_2 = 1\right\rbrace,\\
				&\varGamma_4 = \left\lbrace x | x = \left\lbrace x_1, x2\right\rbrace , x_1 = 0, 1/4 < x_2 < 3/4\right\rbrace. \\
			\end{aligned}
		\end{equation*}
		The function $g \notin H^{3/2}(\partial \Omega)$ is the indicator function of a subset of $\partial \Omega$, which implies that there is no solution in $H^2(\Omega)$. 
        
   To compute the numerical solution, we approximate $g(x)$ by $g_\delta(x)$ with $\delta=1/16$, defined on each $\varGamma_i$ similarly as follows: Take the function $g_\delta(x)$ on $\varGamma_1$ as an example,	
\begin{align*}
	g_\delta(x)=
	\begin{cases}
		1, &0 \leq x_1 \leq 1/4-\delta \ \mbox{or} \ 3/4+\delta \leq x_1 \leq 1, \\
		\dfrac{1}{2}[1-\sin(\dfrac{\pi}{2}(x_1-\dfrac{1}{4})/\delta)],\quad &1/4-\delta \leq x_1 \leq 1/4+\delta,\\
		0,\quad &1/4+\delta \leq x_1 \leq 3/4-\delta,\\
		\dfrac{1}{2}[1+\sin(\dfrac{\pi}{2}(x_1-\dfrac{3}{4})/\delta)],\quad &3/4-\delta \leq x_1 \leq 3/4+\delta.\\
	\end{cases}
\end{align*} 
		\begin{figure}[t!]
			\centering
			\begin{tabular}{ccc}
            (a)&(b)&(c)\\
				\includegraphics[width=0.3\textwidth]{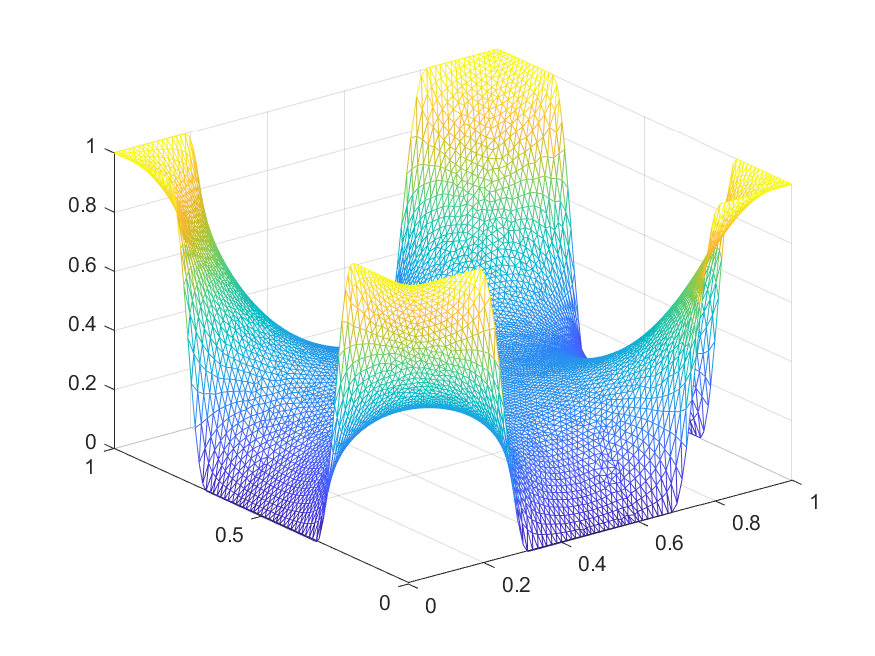} &
				\includegraphics[width=0.3\textwidth]{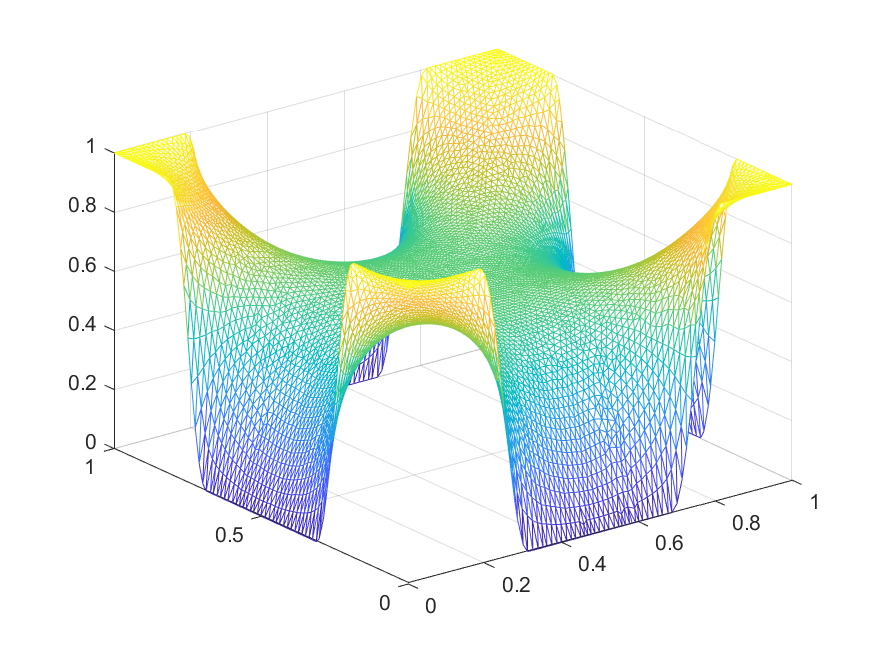} &
				\includegraphics[width=0.3\textwidth]{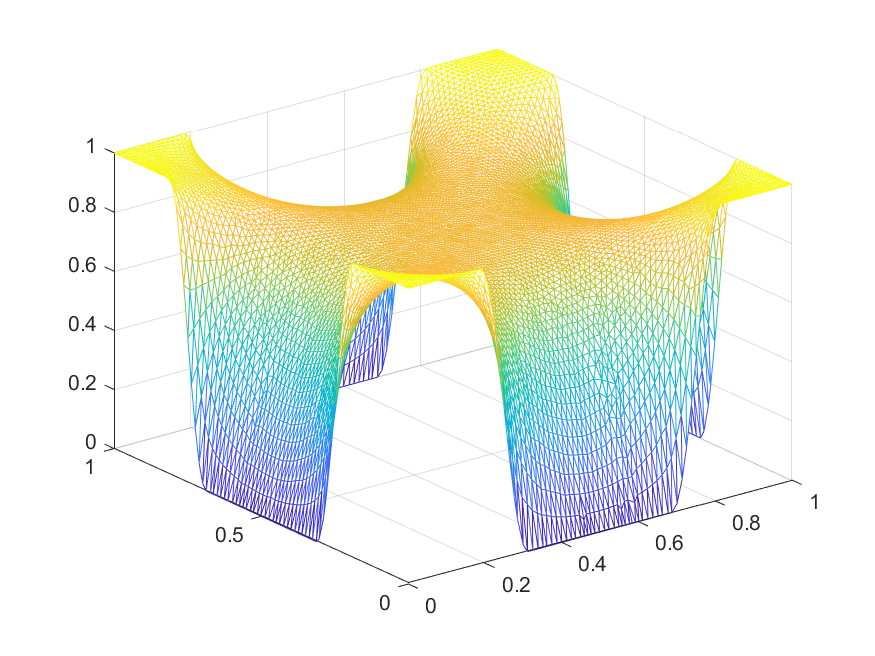} \\
			\end{tabular}
			\caption{(Pucci's equation.) Numerical results for problem \eqref{P-nonsmooth} on the unstructured mesh, Figure \ref{mesh}(b): The graph of the numerical solution with $h=1/80$ of (a) $\alpha=1.1$, (b) $\alpha=2.0$, and (c) $\alpha=3.0$, respectively.}
			\label{P-nonsmooth-graph}
		\end{figure}

		\begin{figure}[t!]
			\centering
			\begin{tabular}{cccccc}
           \multicolumn{1}{c}{($\alpha=1.1$)} & \multicolumn{1}{c}{($\alpha=2.0$)} & \multicolumn{1}{c}{($\alpha=3.0$)} \\
				\includegraphics[width=0.3\textwidth]{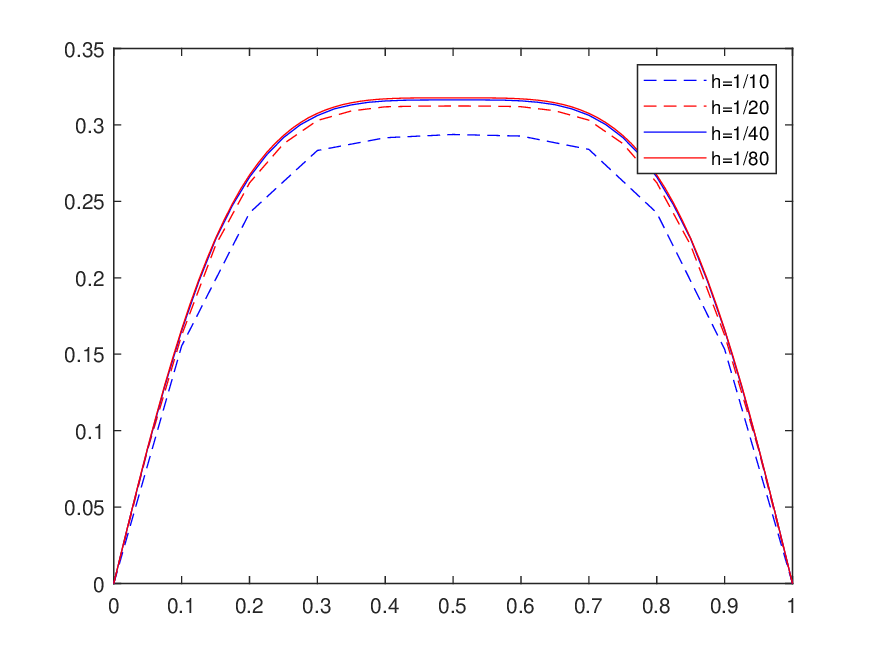} &
                \includegraphics[width=0.3\textwidth]{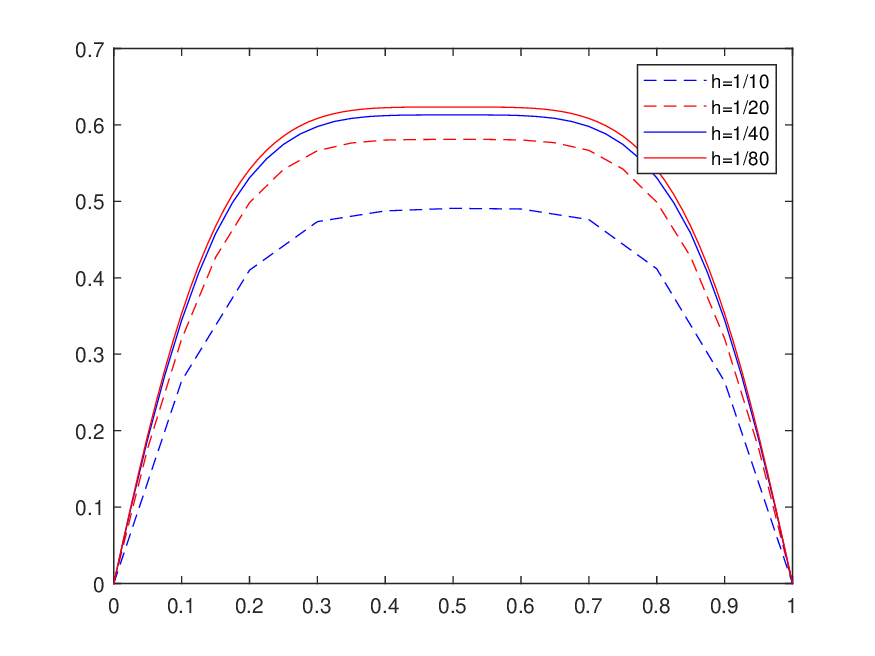} &
                \includegraphics[width=0.3\textwidth]{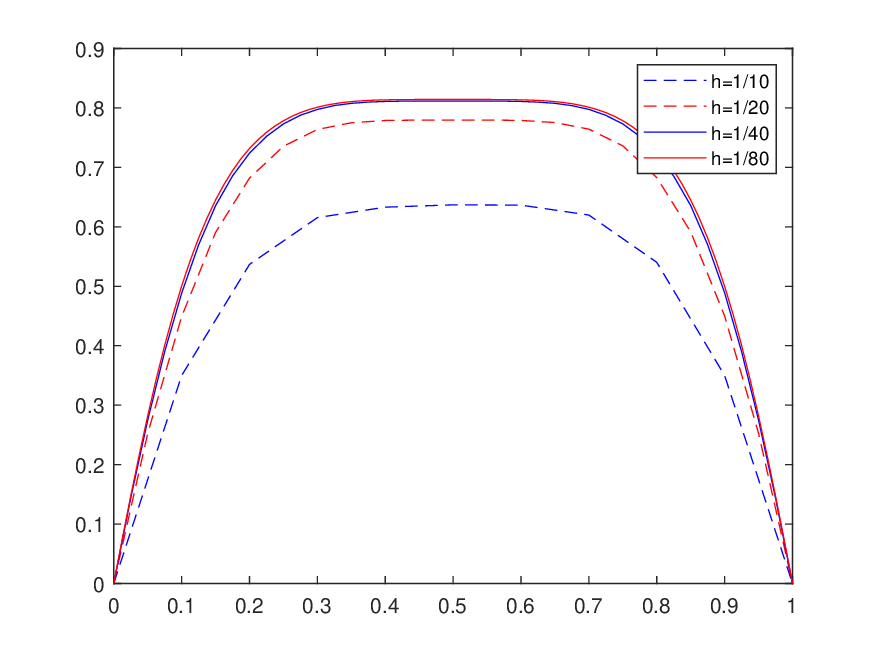}\\
				\multicolumn{3}{c}{} \\
				\includegraphics[width=0.3\textwidth]{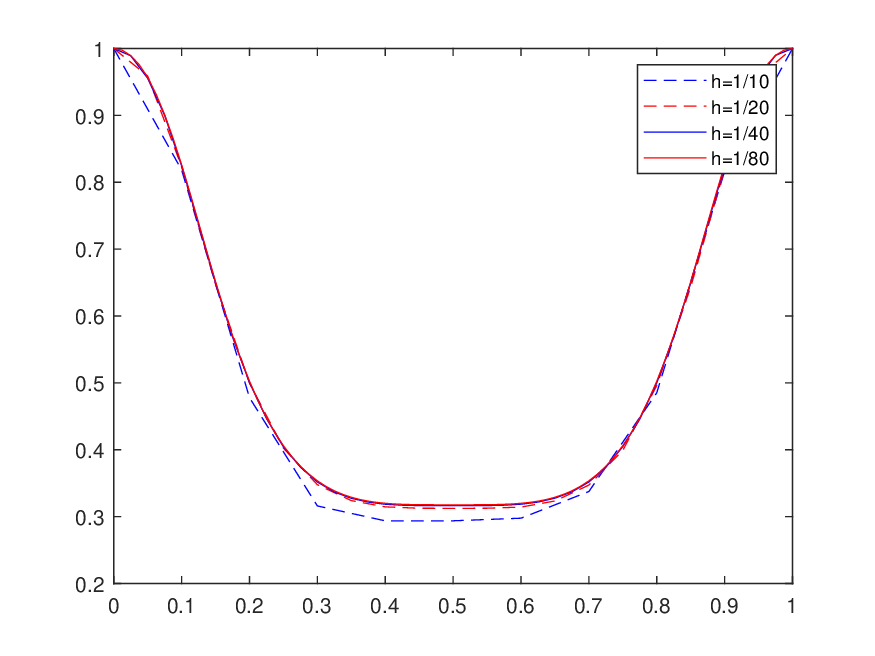} &
                \includegraphics[width=0.3\textwidth]{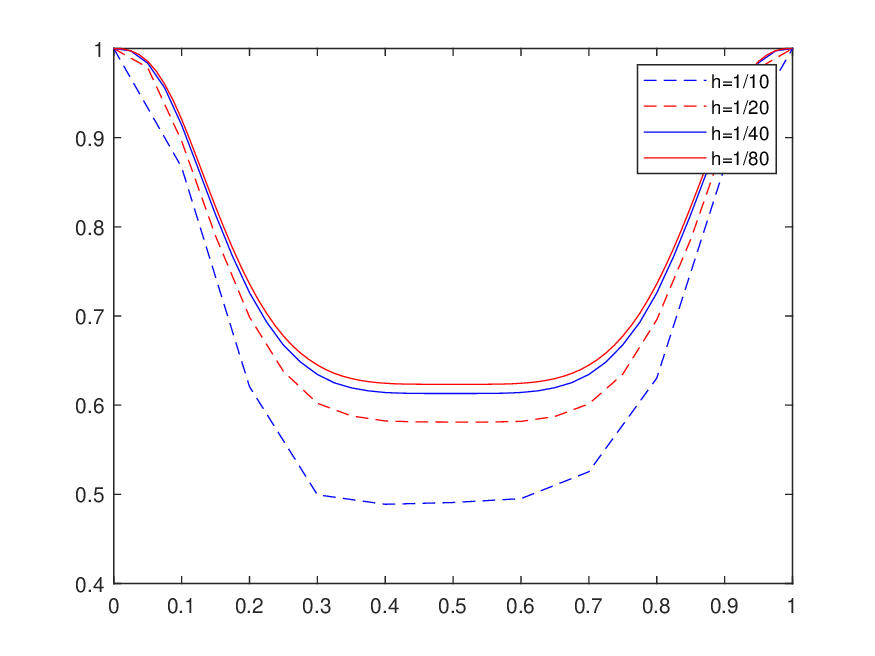} &
            	\includegraphics[width=0.3\textwidth]{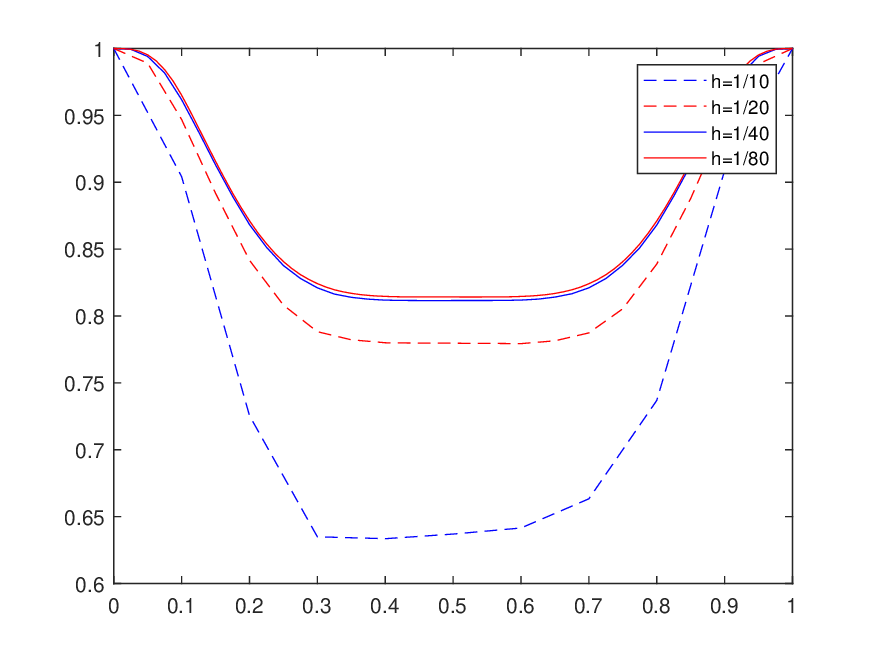} 
                \\
			\end{tabular}
			\caption{(Pucci's equation.) Numerical results for problem \eqref{P-nonsmooth} on the unstructured mesh, Figure \ref{mesh}(b): cross sections of the computed solution along the line $x_1 = 0.5$ (first row) and the line $x_1 = x_2$ (second row).}
			\label{P-nonsmooth-shoot}
		\end{figure}

		\begin{figure}[t!]
			\centering
			\begin{tabular}{cc}
				\includegraphics[width=0.4\textwidth]{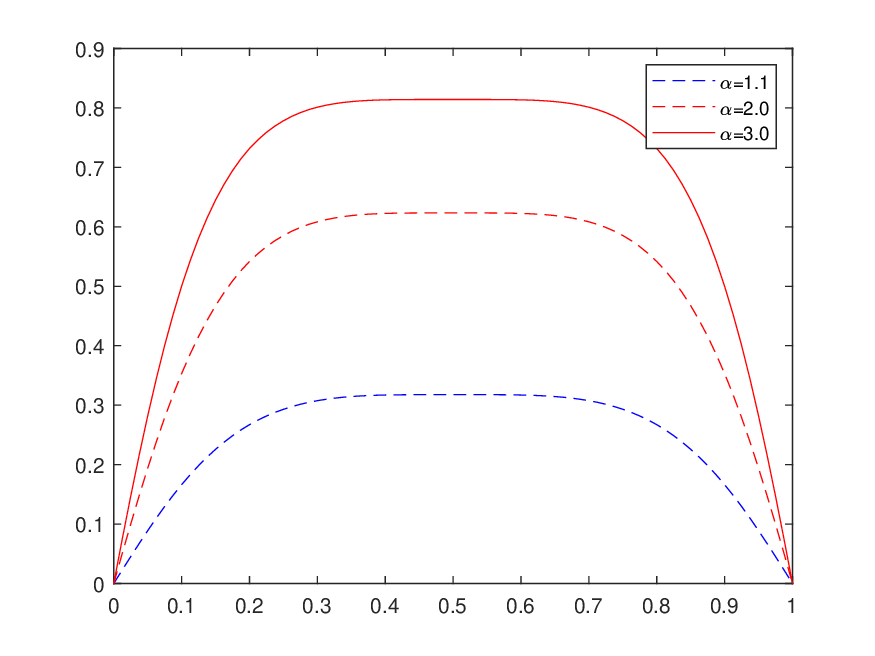} &
				\includegraphics[width=0.4\textwidth]{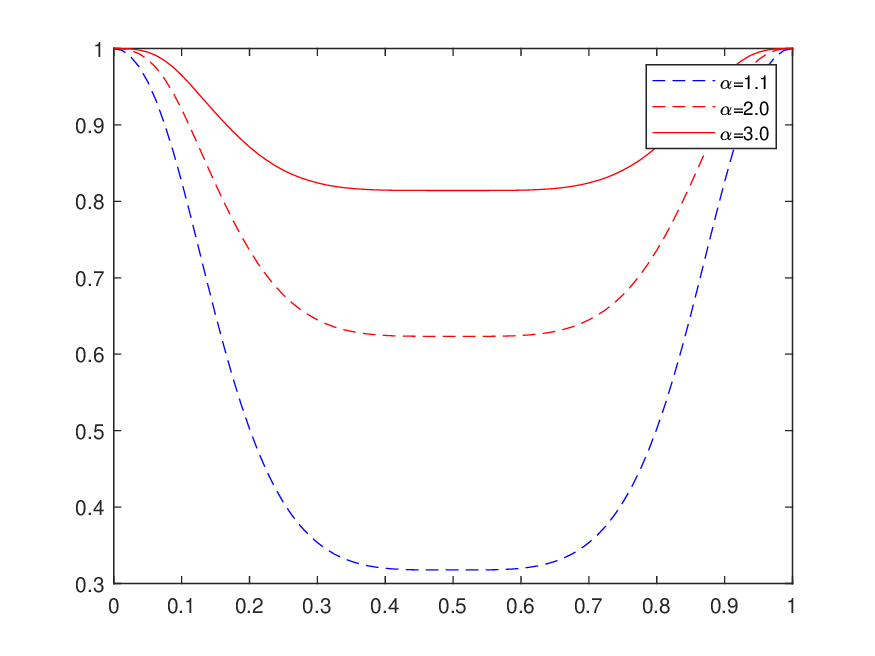}
			\end{tabular}
			\caption{(Pucci's equation.) Numerical results for problem \eqref{P-nonsmooth} on the unstructured mesh,  Figure \ref{mesh}(b), with $h=1/80$: cross sections of the computed solution along the line $x_1 = 0.5$ (left) and the line $x_1 = x_2$ (right) with $\alpha= 1.1$, $\alpha= 2.0$, and $\alpha = 3.0$, respectively.}
			\label{P-nonsmooth-shoot2}
		\end{figure}
        
		The related computational results are shown in Figures~\ref{P-nonsmooth-graph}, \ref{P-nonsmooth-shoot}, and \ref{P-nonsmooth-shoot2}. Figure~\ref{P-nonsmooth-graph} illustrates graphs of the numerical approximations for various values of \(\alpha\) with \(h = 1/80\). Figure~\ref{P-nonsmooth-shoot} shows cross sections of the computed solution along the line \(x_1 = 0.5\) (first row) and the line \(x_1 = x_2\) (second row) for \(\alpha = 1.1\) (left), \(\alpha = 2\) (middle), and \(\alpha = 3\) (right). We observe that our numerical solutions converge to a limit solution, which are consistent with those observed in \cite{Glowinski}. Furthermore, Figure~\ref{P-nonsmooth-shoot2} indicates that the value of \(u\) exhibits a positive correlation with the value of \(\alpha\). 
       
\section{Conclusion}
\label{sec.conclusion}

We have developed a fast operator-splitting method for solving two-dimensional semilinear elliptic equations and Monge-Amp\`ere type equations. For the semilinear case, the convergence of the proposed method is established. To address Monge-Amp\`ere type equations, we employ a novel eigenvalue-based reformulation that transforms them into a semilinear form. The scheme is spatially discretized using a mixed finite element method with piecewise linear bases, making it straightforward to apply to problems on both polygonal and curved domains. Extensive numerical experiments show that our approach is more efficient than existing methods while delivering comparable or superior accuracy. For the semilinear equation, the Dirichlet Monge-Amp\`ere equation, and Pucci's equation, our method achieves the optimal convergence rate when a classical solution exists.

\section*{Acknowledgments}
Shingyu Leung was partially supported in part by the Hong Kong RGC grants 16302223 and 16300524. Jianliang Qian was partially supported by NSF grants 2152011 and 2309534 and an MSU SPG grant. Hao Liu was partially supported by HKRGC ECS 22302123 and GRF 12301925.

		\bibliographystyle{abbrv}
		\bibliography{yang}

	\end{document}